\theoremstyle{plain}
\newtheorem{theorem}{Theorem}[section]
\newtheorem{proposition}[theorem]{Proposition}
\newtheorem{lemma}[theorem]{Lemma}
\newtheorem{corollary}[theorem]{Corollary}
\theoremstyle{definition}
\newtheorem{remark}[theorem]{Remark}
\numberwithin{equation}{section}
\title{Negative resolution to the $C^*$-algebraic Tarski problem}
\author{Srivatsav Kunnawalkam Elayavalli}
\address{Srivatsav Kunnawalkam Elayavalli, 
Department of Mathematics, University of California, San Diego, 9500 Gilman Drive \#0112, La Jolla, CA 92093, USA}
\email{skunnawalkamelayaval@ucsd.edu}
\author{Christopher Schafhauser}
\address{Christopher Schafhauser, Department of  Mathematics, University of Ne\-bra\-ska--Lincoln, 1400 R St., Lincoln, NE 68588, USA}
\email{cschafhauser2@unl.edu}
\date{\today}
\subjclass[2020]{46L05}
\thanks{This project was partially supported by NSF grants DMS 2350049 (Kunnawalkam Elayavalli) and DMS 2400178 (Schafhauser).}
\begin{document}
	
\begin{abstract}
	We compute the $K_1$-group of ultraproducts of unital, simple $C^*$-algebras with unique trace and strict comparison.  %Combining this with a recent result of the first author with Amrutam, Gao, and Patchell, 
    As an application, we prove that the reduced free group $C^*$-algebras $C^*_r(F_m)$ and $C^*_r(F_n)$ are elementarily equivalent (i.e., have isomorphic ultrapowers) if and only if $m = n$. This settles in the negative the $C^*$-algebraic analogue of Tarski's 1945 problem for groups.
\end{abstract}

\maketitle

\section{Introduction}

%\textbf{TO DO:} General propaganda about Tarski's problem.

The structure of non-abelian free groups $F_n$ has been a subject of sustained interest over the last century. In this avenue, a landmark problem of the twentieth century was formulated by Tarski in 1945, which asks if these groups are pairwise elementarily equivalent, i.e., if they have the same first-order theory as groups. By the Keisler--Shelah theorem, this is equivalent to asking if there are ultrafilters  $\omega_1$ and  $\omega_2$ on some sets $I_1$ and $I_2$, respectively, such that the ultrapowers $({F_n})_{\omega_1}$ and $ ({F_m})_{\omega_2}$ are isomorphic (see Section \ref{sec:ultraproducts} for definitions).  This problem remained open for several decades until the remarkable works of Sela (\cite{TarskiSela}) and Kharlampovich--Myasnikov (\cite{KHARLAMPOVICH2006451}) independently resolved this problem in the affirmative. While the free groups themselves are pairwise non-isomorphic, with their abelianizations $\mathbb{Z}^n$ distinguished by rank, the first-order theory completely forgets this information.

In the context of functional analytic completions of groups, leading to group operator algebras, there is a natural analogue of Tarski's problem: namely, if the reduced $C^*$-algebras of the free groups are elementarily equivalent in the first-order theory of $C^*$-algebras. This problem has been circulated among experts and has been of importance in the field since the inception of the systematic study of model theory of operator algebras and, in particular, $C^*$-algebras (see \cite{mtoa1, MTOA2, MTOA3, FHLRTVW:MAMS}). In this paper, we settle this problem in the negative. This surprisingly goes in the opposite direction of the resolution of Tarski's problem in the group case.

\begin{theorem}\label{thm:tarski}
	If $m, n \in \mathbb N \cup \{\infty\}$ are such that $C^*_r(F_m)$ and $C^*_r(F_n)$ are elementarily equivalent, then $m = n$.
\end{theorem}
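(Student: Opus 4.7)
The plan is to use $K_1$ of the ultrapower as a distinguishing invariant, feeding the paper's own $K_1$-computation as the main technical input. By the Keisler--Shelah theorem for continuous logic, the elementary equivalence hypothesis produces an ultrafilter $\omega$ on some index set for which $C^*_r(F_m)^\omega \cong C^*_r(F_n)^\omega$ as $C^*$-algebras, and hence their $K_1$-groups agree. The classical Pimsner--Voiculescu computation yields $K_1(C^*_r(F_n)) \cong \mathbb{Z}^n$ (with $\mathbb{Z}^\infty$ understood as the free abelian group of countable rank), so the real task is to show that the rank of this group is faithfully transmitted to $K_1$ of the ultrapower.

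To apply the paper's main theorem I would first verify that each $C^*_r(F_n)$ meets its hypotheses. Unitality is trivial; simplicity and uniqueness of the canonical trace are theorems of Powers from the 1970's; strict comparison of positive elements for the reduced free group $C^*$-algebras is the more delicate input, which I would cite from the relevant recent work. With these in place, the theorem announced in the abstract should describe $K_1(C^*_r(F_n)^\omega)$ in terms of $K_1(C^*_r(F_n)) = \mathbb{Z}^n$ together with analytic data coming from the unique ultralimit trace.

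The decisive step, and the part I expect to be the main obstacle, is to extract from this description a numerical invariant that recovers $n$. The $K_1$ of an ultrapower is generically enormous --- for instance, it carries a non-separable subgroup detected by de la Harpe--Skandalis type determinants associated to the ultralimit trace --- so one needs to isolate a canonical subquotient that is preserved under arbitrary $C^*$-algebraic isomorphisms and whose $\mathbb{Q}$-rank equals $n$. Natural candidates are the kernel or image of a determinant-type homomorphism coming from the unique trace, or a quotient that collapses the ``large'' ultraproduct part and leaves a copy of the original $\mathbb{Z}^n$ behind. Once such an invariant is identified and shown to detect $n$, distinct values of $n \in \mathbb{N} \cup \{\infty\}$ are separated by rank, and $m = n$ follows. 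For $n = \infty$ one must additionally check that the countable rank does not collapse to a finite rank under the extraction, but this should be automatic once the finite case is settled.
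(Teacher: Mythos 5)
Your setup coincides with the paper's: Keisler--Shelah, the Pimsner--Voiculescu computation $K_1(C^*_r(F_n)) \cong \mathbb Z^n$, and verification of simplicity, unique trace, and strict comparison so that the ultrapower $K_1$-computation applies. But the step you yourself flag as ``the decisive step'' is a genuine gap, and the candidates you propose for closing it would fail. First, the paper's computation (Corollary~\ref{cor:k1-ultraproduct}) does not describe $K_1$ of the ultrapower as ``$\mathbb Z^n$ together with analytic data from the ultralimit trace'': it gives the clean isomorphism $K_1(A_\omega) \cong K_1(A)_\omega \cong (\mathbb Z^n)_\omega$ with the plain abelian-group ultrapower, and in particular there is \emph{no} determinant-detected subgroup of $K_1(A_\omega)$ --- de la Harpe--Skandalis-type data lives in $U^0(A_\omega)/\overline{DU(A_\omega)}$, and the injectivity of $\bar\eta$ (via Theorem~\ref{thm:k1-inj} and stable rank one) says exactly that none of it survives into $K_1$. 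Second, no rank-type invariant can recover $n$ from $(\mathbb Z^n)_\omega$: already $\mathbb Z_\omega$ has $\mathbb Q$-rank $2^{\aleph_0}$ (for distinct reals $r > 1$ the classes of the sequences $(\lfloor r^k \rfloor)_{k}$ are rationally independent, by comparing growth rates along the ultrafilter), so $(\mathbb Z^n)_\omega$ has the same uncountable rank for every $n \geq 1$, and there is no canonical subquotient of $\mathbb Q$-rank $n$: the diagonal copy of $\mathbb Z^n$ inside $(\mathbb Z^n)_\omega$ is not preserved by abstract isomorphisms. Postulating that ``such an invariant exists and detects $n$'' is essentially assuming the theorem.

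The missing idea is model-theoretic rather than rank-theoretic. By \L{}o\'s's theorem, $(\mathbb Z^n)_\omega \equiv \mathbb Z^n$ as discrete groups, so once $K_1(C^*_r(F_m)_{\omega_1}) \cong K_1(C^*_r(F_n)_{\omega_2})$ it suffices to show $\mathbb Z^m \not\equiv \mathbb Z^n$ for $m \neq n$, and the paper does this with an explicit first-order sentence asserting the existence of $x_1, \ldots, x_n$ such that for every $y$ some subset sum $\sum_{i \in I} x_i + y$ is divisible by $2$; this holds in $\mathbb Z^n$ but fails in $\mathbb Z^m$ for $m > n$ because $n$ elements cannot span the $\mathbb F_2$-vector space $(\mathbb Z/2\mathbb Z)^m$. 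Equivalently, in invariant terms: the quantity that survives the ultrapower is the mod-$2$ dimension, since $K_1(A_\omega)/2K_1(A_\omega) \cong \big((\mathbb Z/2\mathbb Z)^n\big)_\omega \cong (\mathbb Z/2\mathbb Z)^n$ for finite $n$ (ultrapowers of finite structures are trivial), and this also separates every finite $n$ from $n = \infty$, where your ``should be automatic'' remark likewise needs the same argument rather than a rank count. Without this final step your proof does not close.
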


It is a result of Pimsner and Voiculescu (\cite{PimsnerVoiculescu:JOT}) that $K_1(C^*_r(F_n)) \cong \mathbb Z^n$ for $n \in \mathbb N \cup \{\infty\}$, and hence the reduced free group $C^*$-algebras are mutually non-isomorphic.  It is also easy to show that if $m, n \in \mathbb N \cup \{\infty\}$ are such that $\mathbb Z^m$ and $\mathbb Z^n$ are elementarily equivalent, then $m = n$.  The difficulty in using this to prove Theorem~\ref{thm:tarski} is that for a $C^*$-algebra $A$, the elementary equivalence class of $K_1(A)$ is generally not an elementary invariant of $A$. In more detail, for a $C^*$-algebra $A$ and a free ultrafilter $\omega$ on $\mathbb N$, there is a natural group homomorphism
\begin{equation}\label{eq:bareta-intro}
	\bar\eta \colon K_1(A_\omega) \rightarrow K_1(A)_\omega,
\end{equation}
where $A_\omega$ denotes the $C^*$-algebra ultrapower of $A$ and $K_1(A)_\omega$ denotes the (discrete) abelian group ultrapower of $K_1(A)$ (see Section~\ref{sec:ultraproducts}), but $\bar\eta$ is typically not an isomorphism.  In fact, both injectivity and surjectivity fail in general.

The fundamental problem is that $K_1(A)$ considers unitaries and homotopies between them in all matrices over (the minimal unitization of) $A$, and in general, there is no bound on the size of matrices needed to witness elements of $K_1(A)$ or homotopies between unitaries.  However, in the context of Theorem~\ref{thm:tarski}, when $A$ is a reduced free group algebra, this problem can be avoided.  Indeed, in this case, $A$ has stable rank one (i.e., a dense set of elements are invertible) by a result of Dykema, Haagerup, and R{\o}rdam (\cite{DykemaHaagerupRordam:Duke}), and Rieffel has shown in \cite{Rieffel:JOT} that for a unital $C^*$-algebra $A$ of stable rank one, $K_1(A) = U(A)/ U^0(A)$, where $U(A)$ is the unitary group of $A$ with the norm topology and $U^0(A) \subseteq U(A)$ is the path-component containing $1_A$.  So, no matrix amplifications are necessary in the definition of $K_1(A)$.

It follows immediately from this result that $\bar\eta$ is surjective for unital $C^*$-algebras with stable rank one, but another issue arises when using this to prove injectivity: $U^0(A)$ need not be definable.  In more concrete terms, it is a standard fact that if $u \in U^0(A)$, then there are an integer $n \geq 1$ and self-adjoint elements $h_1, \ldots, h_n \in A$ of norm at most $\pi$ such that 
\begin{equation}\label{eq:exponentials}
	u = e^{i h_1} \cdots e^{i h_n},
\end{equation}
but it is generally not possible to bound $n$ uniformly over all $u \in U^0(A)$ (see \cite{Phillips:AmerJMath}).  If $A$ is a unital $C^*$-algebra with stable rank one and a uniform bound on $n$ exists, it is easy to show  that $\bar\eta$ is injective and hence is an isomorphism.

The notions of \emph{exponential rank} and \emph{exponential length} of a $C^*$-algebra (see \cite{PhillipsRingrose:Nara, Phillips:MathScand} and \cite{Ringrose:Edinburgh}, respectively) were developed precisely to study when such bounds exist.  While we have not been able to produce a uniform bound on $n$ for reduced free group algebras $A$, we show that such decompositions do exist after a $2\times2$ matrix amplification (see Theorem~\ref{thm:k1-inj}).  This weaker result is sufficient to prove $\bar\eta$ is an isomorphism and hence to prove Theorem~\ref{thm:tarski}.

A major ingredient is the recent result of the first author with Amrutam, Gao, and Patchell showing that reduced free group $C^*$-algebras have strict comparison (\cite{AGKEP}, building on \cite{louder2022strongly, robert2023selfless}).  Then Theorem~\ref{thm:tarski} follows from the more general result below.  

\begin{theorem}\label{thm:k1-inj}
	Let $A$ be a unital, simple $C^*$-algebra with unique trace and strict comparison.  If $u \in A$ is a unitary with $[u]_1 = 0 \in K_1(A)$, then for every $\epsilon > 0$, there are self-adjoint $h, k \in M_2(A)$ with $\|h\|, \|k\| \leq \pi$ and $\| u \oplus 1_A - e^{i h} e^{i k} \| < \epsilon$.
\end{theorem}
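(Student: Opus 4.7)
The plan is to first reduce the hypothesis to a product of exponentials in $A$, then compress this product to two factors in $M_2(A)$ via a tracial approximation argument. Under the stated hypotheses, $A$ has stable rank one (a known consequence of simplicity, unique trace, and strict comparison via Cuntz-semigroup arguments). By Rieffel's theorem, $K_1(A) \cong U(A)/U^0(A)$, so $[u]_1 = 0$ gives $u \in U^0(A)$, and a standard path-subdivision argument produces $u = e^{ih_1} \cdots e^{ih_n}$ with self-adjoints $h_j \in A$ of norm at most $\pi$; the number $n$ depends on $u$ and is not otherwise bounded, and the task is to compress this product down to two exponentials upon passing to $M_2(A)$.

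The strategy in $M_2(A)$ is to find a self-adjoint $H$ with $\|H\| \leq \pi$ for which the residual unitary $v := e^{-iH}(u \oplus 1_A) \in U(M_2(A))$ has spectrum bounded away from $-1$ (up to a small norm error). Given such $H$, the principal logarithm of $v$ yields a self-adjoint $K \in M_2(A)$ with $\|K\| < \pi$ and $e^{iK} \approx v$, giving $u \oplus 1_A \approx e^{iH} e^{iK}$ within $\epsilon$. A natural target for $H$ is identified by passing to the tracial completion $N := \pi_\tau(A)''$, a II$_1$ factor by simplicity and uniqueness of the trace: in $M_2(N)$, Borel functional calculus yields an exact self-adjoint $H_N$ with $\|H_N\| \leq \pi$ and $e^{iH_N} = u \oplus 1_A$, and we aim to realize $H$ as a norm-controlled approximation to $H_N$ from inside $M_2(A)$.

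The main obstacle is this norm approximation. The spectrum of $u$ may be the entire unit circle, so the Borel log defining $H_N$ is discontinuous at $-1$ and cannot be approximated in norm by any continuous functional calculus of $u \oplus 1_A$ alone. Strict comparison and uniqueness of the trace are used essentially here, in concert with the off-diagonal freedom of $M_2(A)$: strict comparison against the single trace $\tau$ allows us to realize the spectral distribution of $H_N$ (in particular its mass near $-1$) as Cuntz classes of positive elements of $A$, while the off-diagonal structure of $M_2(A)$ assembles these Cuntz classes into a genuine self-adjoint $H$ with the required norm bound and with $e^{iH}$ approximating $u \oplus 1_A$ up to a unitary with spectrum avoiding $-1$. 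The matrix amplification is crucial: in $A$ itself the construction may fail for want of sufficient projection structure (indeed, $C^*_r(F_n)$ is projectionless), but $M_2(A)$ provides exactly the extra rank needed for a norm-controlled realization, and this tracial-Cuntz-semigroup compression step is the technical heart of the proof.
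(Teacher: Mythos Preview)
Your proposal has a genuine gap at exactly the point you label ``the technical heart of the proof.'' You want a self-adjoint $H \in M_2(A)$ with $\|H\| \le \pi$ such that $e^{-iH}(u \oplus 1_A)$ has spectrum bounded away from $-1$. But that condition forces $\|u \oplus 1_A - e^{iH}\| < 2$, i.e., $u \oplus 1_A$ is norm-close to a single exponential in $M_2(A)$. You propose to obtain $H$ by approximating the Borel logarithm $H_N \in M_2(N)$ from inside $M_2(A)$, invoking ``strict comparison'' and ``off-diagonal freedom,'' but you give no mechanism for this. Cuntz-semigroup data records positive elements only up to Cuntz equivalence; it does not produce a specific self-adjoint $H$ with $e^{iH}$ norm-close to a prescribed unitary. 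The approximation you need is in \emph{operator norm}, while the only sense in which $M_2(A)$ approximates $M_2(N)$ is in the trace norm, and the discontinuity of the Borel logarithm at $-1$ is precisely the obstruction to upgrading this. Your opening decomposition $u = e^{ih_1}\cdots e^{ih_n}$ is never used and does not help.

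The paper's proof takes a completely different route. It works in the ultrapower $A_\omega$ and uses the trace-kernel extension $0 \to J_A \to A_\omega \to A^\omega \to 0$, where $A^\omega$ is a II$_1$ factor. Borel functional calculus in $A^\omega$ produces $a \in A_\omega$ with $q_A(ue^{ia}) = 1$, so $ue^{ia}$ lives in the unitization of $J_A$; the six-term exact sequence and the computation $K_0(A^\omega) \cong \mathbb{R}$ then yield $b$ with $[ue^{ia}]_1 = [e^{ib}]_1$ in $K_1(J_A)$. The decisive step is Lemma~\ref{lem:ext}: using that $J_A$ is separably stable with almost unperforated Cuntz semigroup, the Elliott--Kucerovsky absorption theorem together with the Dadarlat--Eilers stable uniqueness theorem shows that $ue^{ia} \oplus e^{ic}$ and $e^{ib} \oplus e^{ic}$ are (approximately) unitarily conjugate in $M_2(A_\omega)$ for a totally full unitary $e^{ic}$. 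The $2\times 2$ amplification arises here, from the absorbing summand in the stable uniqueness theorem, not from any spectral-compression argument. Rearranging gives $u \oplus 1_A = e^{ih}e^{ik}$ in $M_2(A_\omega)$, and lifting to representing sequences gives the $\epsilon$-approximation in $M_2(A)$.
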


For $A$ as in Theorem~\ref{thm:k1-inj}, a recent result of Lin (\cite{Lin:sr1}) states that $A$ has stable rank one (see Theorem~\ref{thm:sr1} below).  This then implies that the map $\bar\eta$ in \eqref{eq:bareta-intro} is an isomorphism.  In fact, we also show that the analogous maps are isomorphisms for products and ultraproducts of sequences of $C^*$-algebras as in Theorem~\ref{thm:k1-inj}: see Theorem~\ref{thm:k1-product} and Corollary~\ref{cor:k1-ultraproduct}, respectively.

We end this introduction with a discussion of the methods used in Theorem~\ref{thm:k1-inj}.
There has been significant work on variations of this kind of decomposition dating back to the early 90s (see \cite{PhillipsRingrose:Nara, Phillips:MathScand, Ringrose:Edinburgh, Zhang:AnnMath, GongLin:MathScand,Lin:JFA1}, for example).  A very general result of this form is due to Lin in \cite{Lin:JFA2} for simple, $\mathcal Z$-stable $C^*$-algebras which, after tensoring with a UHF algebra, have a rich internal tracial approximation structure.  The basic strategy used in \cite{Lin:JFA2}, which is also used here, is to classify unital $^*$-homomorphisms $C(\mathbb T) \rightarrow A$ up to approximate unitary equivalence and to exhaust the range of the invariant with unitaries of a particular form to conclude that unitaries in $U^0(A)$ are approximately conjugate to a product of exponentials of the desired form.

In the case of the reduced free group algebras, and many other $C^*$-algebras covered by Theorem~\ref{thm:k1-inj}, neither $\mathcal Z$-stability nor the internal tracial approximation conditions hold.  We avoid the need for tracial approximations using the trace-kernel methods developed in \cite{CGSTW1,CGSTW2} (refining those of \cite{Schafhauser:Crelle,Schafhauser:AnnMath}) in relation to much more general classification results for embeddings of $C^*$-algebras.  While the results do not apply directly (most prominently, due to the lack of $\mathcal Z$-stability of the codomain), many of the intermediate results and techniques do apply.

As noted in \cite[Section~1.3.6]{CGSTW1}, $\mathcal Z$-stability essentially plays three roles in the classification results of \cite{CGSTW1}.  One is to obtain strict comparison, which we assume to hold in the setting of Theorem~\ref{thm:k1-inj} and holds in the setting of Theorem~\ref{thm:tarski} by \cite{AGKEP}.  Another is to access the ``complemented partitions of unity'' techniques from \cite{CETWW:InventMath}, which are irrelevant in the present setting of $C^*$-algebras with unique trace.  Finally, $\mathcal Z$-stability more subtly appears as a way of removing a matrix amplification occurring in the proof of the stable uniqueness theorem of Dadarlat and Eilers (particularly, \cite[Theorem~3.8]{DadarlatEilers:KT}); see \cite[Theorem~5.15]{CGSTW1}.  This last use of $\mathcal Z$-stability can be avoided at the expense of passing to the $2 \times 2$ matrix amplification in  Theorem~\ref{thm:k1-inj}.

\subsection*{Acknowledgments} This work began during the second author's visit to UCLA in the Fall of 2022, and he thanks the first author and the UCLA math department for their hospitality. We also thank Bradd Hart, Ilijas Farah, David Jekel, Michael Magee, and Aaron Tikuisis for helpful comments on the paper. 
\section{Preliminaries}

Throughout, $\omega$ will denote a free ultrafilter on the natural numbers.  For a $C^*$-algebra $A$ and an integer $d \geq 1$, let $M_d(A)$ denote the $C^*$-algebra of $d \times d$ matrices over $A$.  We identify $M_d(A)$ as a subalgebra of $M_{d+1}(A)$ along the embedding $a \mapsto a \oplus 0$ and write $M_\infty(A)$ for the union.  For integers $d_1, d_2 \geq 1$ and $a \in M_{d_1}(A)$ and $b \in M_{d_2}(A)$, let $a \oplus b \in M_{d_1+d_2}(A)$ denote the block diagonal matrix with diagonal entries $a$ and $b$.  In this way, we also define $a \oplus b \in M_\infty(A)$ for all $a, b \in M_\infty(A)$.  For a $^*$-homomorphism $\phi \colon  A \rightarrow B$ between $C^*$-algebras $A$ and $B$ and an integer $d \geq 1$, the induced map $M_d(A) \rightarrow M_d(B)$ will also be denoted by $\phi$.  A \emph{trace} $\tau$ on a $C^*$-algebra $A$ will always mean a tracial state.  Note that any such $\tau$ induces a positive tracial functional on $M_\infty(A)$, still denoted $\tau$, given by $a \mapsto \sum_{i=1}^\infty \tau(a_{i, i})$.  

We recall the notions of Cuntz subequivalence and strict comparison in Section~\ref{sec:comparison}.  Our notation for ultraproducts and the definition of the map $\bar\eta$ in \eqref{eq:bareta-intro} are set out in Section~\ref{sec:ultraproducts}.  Section~\ref{sec:trace-kernel} recalls the definition and some properties of the trace-kernel extension of a $C^*$-algebra with respect to a trace, and finally, Section~\ref{sec:absorption} contains some remarks on absorbing $^*$-homomorphisms into stable multiplier algebras.

\subsection{Cuntz subsequivalence and comparison}\label{sec:comparison}

Cuntz subequivalence was defined in \cite{Cuntz:MathAnn} in connection with the study of dimension functions and traces on $C^*$-algebras and has become a indispensable tool in modern $C^*$-algebra theory.  The $C^*$-algebraic comparison theory has its origins in the work of Blackadar in \cite{Blackadar:LMS}, motivated by the Murray--von Neumann comparison theory for projections in von Neumann algebras, and was rephrased in terms of the ordered abelian semigroup property known as \emph{almost unperforation} by R{\o}rdam in \cite{Rordam:IntJMath}.  We recall some of the basic notions here.

For a $C^*$-algebra $A$ and positive elements $a, b \in M_\infty(A)$, we say $a$ is \emph{Cuntz subequivalent} to $b$ and write $a \precsim b$ if there is a sequence $(v_n)_{n=1}^\infty \subseteq M_\infty(A)$ such that $\|v_n^* b v_n - a\| \rightarrow 0$.   We say $A$ has \emph{almost unperforated Cuntz semigroup} if whenever $k \geq 1$ is an integer, $a, b \in M_\infty(A)$ are positive, and $a^{\oplus k + 1} \precsim b^{\oplus k}$, we have $a\precsim b$.

\begin{remark}
	For a $C^*$-algebra $A$, we say $a, b \in M_\infty(A)$ are \emph{Cuntz equivalent} and write $a \sim b$ if $a \precsim b$ and $b \precsim a$.  The \emph{uncompleted Cuntz semigroup} of $A$ is the set $W(A) = M_\infty(A) / \sim$, viewed as an ordered abelian semigroup with the addition and order induced by direct sum and Cuntz subequivalence, respectively.  Then $W(A)$ is \emph{almost unperforated} if for all integers $k \geq 1$ and $x, y \in W(A)$, we have $(k+1) x \leq ky$ implies $x \leq y$.
	
	The \emph{completed Cuntz semigroup} $\mathrm{Cu}(A)$ is defined analogously using $A \otimes \mathcal K$ in place of $M_\infty(A)$, where $\mathcal K$ denotes the $C^*$-algebra of compact operators on a separable, infinite-dimensional Hilbert space and the direct sum operation is defined using a fixed identification $\mathcal K \cong M_2(\mathcal K)$.  It is a standard fact that $W(A)$ is almost unperforated if and only if $\mathrm{Cu}(A)$ is almost unperforated, so there is no ambiguity when we say that a $C^*$-algebra has almost unperforated Cuntz semigroup.  Indeed, the forward direction is \cite[Lemma~2.10]{OrtegaPereraRordam:IMRN}, applied with $n = 0$, and the converse is immediate after identifying $M_\infty(A)$ with a dense subalgebra of $A \otimes \mathcal K$.
\end{remark}

For a $C^*$-algebra $A$, a trace $\tau$ on $A$, and positive $a \in M_\infty(A)$, define
\begin{equation}
	d_\tau(a) = \lim_{n \rightarrow \infty} \tau(a^{1/n}).
\end{equation}
The function $d_\tau$ is called the \emph{dimension function} associated to $\tau$.  Note that if $a$ and $b$ are positive matrices over $A$, then $a \precsim b$ implies $d_\tau(a) \leq d_\tau(b)$ for all traces $\tau$ on $A$.  A unital $C^*$-algebra $A$ has \emph{strict comparison (with respect to traces)} if for all positive matrices $a, b \in M_\infty(A)$, $d_\tau(a) < d_\tau(b)$ for all traces $\tau$ on $A$ implies $a \precsim b$.

A connection between strict comparison and almost unperforation of the Cuntz semigroup is given by the following well-known (and easy) fact.  More generally, almost unperforation of the Cuntz semigroup is equivalent to a variation of strict comparison which accounts for all lower-semicontinuous quasitracial weights on the $C^*$-algebra; see \cite[Corollary~4.7]{Rordam:IntJMath}.  Results of this form are tricky to track down in the literature and often have superfluous hypotheses, largely due to the many competing definition of strict comparison, so we include a short proof for completeness.

\begin{proposition}\label{prop:comparison}
	If $A$ is a simple $C^*$-algebra with strict comparison, then the Cuntz semigroup of $A$ is almost unperforated.
\end{proposition}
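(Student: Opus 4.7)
The plan is to reduce almost unperforation directly to the hypothesis of strict comparison by applying dimension functions. Given positive $a, b \in M_\infty(A)$ and an integer $k \geq 1$ with $a^{\oplus(k+1)} \precsim b^{\oplus k}$, and any trace $\tau$ on $A$, monotonicity of $d_\tau$ under $\precsim$ (noted in the preliminaries) together with the identity $d_\tau(x^{\oplus m}) = m\, d_\tau(x)$ (immediate from additivity of $\tau$ on $M_\infty(A)$) yields
\[
	(k+1)\, d_\tau(a) \;=\; d_\tau(a^{\oplus(k+1)}) \;\leq\; d_\tau(b^{\oplus k}) \;=\; k\, d_\tau(b).
\]

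The case $a = 0$ is trivial, so assume $a \neq 0$. To invoke strict comparison I need the strict inequality $d_\tau(a) < d_\tau(b)$, and for this it suffices to know $d_\tau(a) > 0$: then $d_\tau(b) \geq \tfrac{k+1}{k}\, d_\tau(a) > d_\tau(a)$. Positivity of $d_\tau(a)$ is where the simplicity of $A$ enters. Every tracial state on a simple $C^*$-algebra is faithful, since its tracial kernel is a closed two-sided ideal, and this faithfulness passes to the induced tracial functional on $M_\infty(A)$. Rescaling to $\|a\| \leq 1$ (which changes neither the Cuntz class of $a$ nor the value of $d_\tau(a)$), functional calculus gives $a \leq a^{1/n}$ for every $n$, so $d_\tau(a) \geq \tau(a) > 0$. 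Hence $d_\tau(a) < d_\tau(b)$ at every trace, and strict comparison yields $a \precsim b$. If $A$ happens to have no traces at all, the hypothesis in the definition of strict comparison is vacuously satisfied, and $a \precsim b$ follows without any further work.

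The only step that is more than routine manipulation is the positivity assertion $d_\tau(a) > 0$ for nonzero positive $a$, which is where the simplicity hypothesis plays its role; after that, the whole argument is essentially a one-line improvement of the inequality $(k+1)\, d_\tau(a) \leq k\, d_\tau(b)$ from $\leq$ to $<$, which is exactly what the strict comparison hypothesis demands. There is no genuine obstacle to surmount.
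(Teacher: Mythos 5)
Your proof is correct and is essentially the same argument as the paper's: both reduce to strict comparison via the inequality $(k+1)\,d_\tau(a) \leq k\,d_\tau(b)$ and use faithfulness of traces on simple $C^*$-algebras to upgrade it to $d_\tau(a) < d_\tau(b)$. The only (immaterial) difference is that you split on $a = 0$ and show $d_\tau(a) > 0$, while the paper splits on $b = 0$ and shows $d_\tau(b) > 0$.
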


\begin{proof}
	Suppose $a, b \in M_\infty(A)$ are positive, $k \geq 1$ is an integer, and $a^{\oplus k+1} \precsim b^{\oplus k}$.  If $b = 0$, we have $a = 0$, so $a \precsim b$.  If $b$ is non-zero, then since traces on simple $C^*$-algebras are faithful, we have $d_\tau(b) > 0$.  Further, $(k+1) d_\tau(a) \leq k d_\tau(b)$ and hence $d_\tau(a) < d_\tau(b)$ for all traces $\tau$ on $A$.  So strict comparison implies $a \precsim b$.
\end{proof}

Recall that a unital $C^*$-algebra $A$ has \emph{stable rank one} if a dense set of elements of $A$ are invertible (see \cite{Reiffel:PLMS}).  We recall here a connection between strict comparison and stable rank recently obtained by Lin in \cite{Lin:sr1} in the special case of $C^*$-algebras with unique trace.  Since this corollary is not explicilty stated in \cite{Lin:sr1}, we include a proof showing how to extract the result from \cite{Lin:sr1}.

\begin{theorem}[{\cite[Theorem~1.1]{Lin:sr1}}]\label{thm:sr1}
    If $A$ is a unital, simple $C^*$-algebra with unique trace and strict comparision, then $A$ has stable rank one.
\end{theorem}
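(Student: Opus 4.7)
The plan is to invoke \cite[Theorem~1.1]{Lin:sr1}, which establishes stable rank one for simple $C^*$-algebras satisfying a comparison hypothesis together with regularity conditions on the tracial state space, and to verify that these conditions follow from unique trace and strict comparison in our sense.

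The first step is to apply Proposition~\ref{prop:comparison} to obtain that the Cuntz semigroup of $A$ is almost unperforated; this captures the algebraic form of comparison used in \cite{Lin:sr1}. The second step is to check any regularity hypothesis on the tracial state space $T(A)$ that appears in \cite{Lin:sr1}: conditions such as $T(A)$ being a Bauer simplex, its extreme boundary being compact, or a tracial approximate oscillation condition all hold trivially, since $T(A)$ is a single point and every continuous affine function on a point is constant, so any quantity indexed by $T(A)$ degenerates. The third step is to reconcile possibly different formulations of strict comparison: if \cite{Lin:sr1} phrases comparison via bounded lower-semicontinuous $2$-quasitraces rather than traces, one argues that in the present simple unital setting the normalized quasi-tracial data collapse to the unique trace $\tau$, so the two versions of comparison coincide; alternatively, one applies Lin's theorem to a version stated directly in terms of traces.

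Assembling these observations, the hypotheses of \cite[Theorem~1.1]{Lin:sr1} are met, and its conclusion yields stable rank one for $A$. The main subtlety I expect is the last step, since $A$ is not assumed to be exact and Haagerup's theorem on $2$-quasitraces being traces is therefore unavailable in general; however, simplicity combined with uniqueness of the trace should be enough to carry out the needed reconciliation in a short paragraph, after which the result is immediate.
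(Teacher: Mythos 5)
Your high-level plan --- invoke \cite[Theorem~1.1]{Lin:sr1} and verify its hypotheses from unique trace plus strict comparison --- is the same as the paper's, but you have misidentified what the substantive hypothesis of Lin's theorem is, and this is a genuine gap. Beyond comparison, the hypothesis that must be verified is not a regularity condition on $T(A)$ (Bauer simplex, compact extreme boundary, oscillation conditions, etc.) but rather that the dimension function is \emph{surjective}: with unique quasitrace $\tau$, one needs $d_\tau \colon \mathrm{Cu}(A) \rightarrow [0,\infty]$ to be onto. This is a statement about the richness of the Cuntz semigroup, not about the trace simplex, and it does not ``degenerate'' when $T(A)$ is a point: it fails, for instance, for $M_n(\mathbb C)$, which is precisely why the paper first disposes of the finite-dimensional case by hand before applying Lin's theorem. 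Verifying surjectivity is where the actual work of the proof lies: the paper uses \cite[Lemma~4.1]{DadarlatToms:JFA} to get that $d_\tau$ has dense range (for $A$ simple and infinite-dimensional), then uses strict comparison to upgrade a sequence $(x_n)_{n=1}^\infty$ in $\mathrm{Cu}(A)$ with $d_\tau(x_n)$ strictly increasing to $t$ into an increasing sequence $x_n \leq x_{n+1}$, so that $x = \sup_{n\geq 1} x_n$ exists in $\mathrm{Cu}(A)$ and satisfies $d_\tau(x) = t$. None of this appears in your proposal, so as written it does not establish the hypotheses of the theorem you are citing.

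Your third step also remains open as stated. You are right to flag that Haagerup's theorem is unavailable without exactness, but the proposed resolution --- that simplicity together with uniqueness of the \emph{trace} forces quasitracial data to collapse to $\tau$ --- is not an argument and is not true on its face: uniqueness of the tracial state does not by itself rule out the existence of a $2$-quasitrace that fails to be a trace, which is exactly the scenario one must exclude. The correct tool, and the one the paper uses, is \cite[Theorem~3.6(1)]{NgRobert:MJM}: strict comparison with respect to traces forces every quasitrace on $A$ to be a trace, after which $\tau$ is the unique quasitrace and the trace/quasitrace formulations of Lin's hypotheses coincide. Your first step (almost unperforation via Proposition~\ref{prop:comparison}) is fine but is not where the difficulty lies; to repair the proof you need the Ng--Robert input and, above all, the surjectivity argument for $d_\tau$.
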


\begin{proof}
    We may assume $A$ is infinite-dimensional since it's easy to see that $M_n(\mathbb C)$ has stable rank one for all $n \geq 1$.  By \cite[Theorem~3.6(1)]{NgRobert:MJM}, every quasitrace on $A$ is a trace, and then since $A$ has unique quasitrace $\tau$ it suffices to show $d_\tau \colon \mathrm{Cu}(A) \rightarrow [0, \infty]$ is surjective by \cite[Theorem~1.1]{Lin:sr1}.    Using \cite[Lemma~4.1]{DadarlatToms:JFA}, for example, $d_\tau$ has dense range.  It's clear that $0$ is in the range of $d_\tau$ since $0 = d_\tau(0)$.  For $t \in (0, \infty]$ fix a sequence $(x_n)_{n=1}^\infty \subseteq \mathrm{Cu}(A)$ such that $(d_\tau(x_n))_{n=1}^\infty$ is a strictly increasing sequence converging to $t$.  Since $A$ has strict comparison and $d_\tau(x_n) < d_\tau(x_{n+1})$, we have $x_n \leq x_{n+1}$.  Hence we may define $x = \sup_{n \geq 1} x_n \in \mathrm{Cu}(A)$.  Then $d_\tau(x) = t$.
\end{proof}

The following standard separabilization argument will allow us to replace the highly non-separable trace-kernel extension recalled in Section~\ref{sec:trace-kernel} with a separable subextension while maintaining the relevant properties.  This is necessary for the extension theoretic arguments in the proof of Lemma~\ref{lem:ext}, which require separability.

\begin{proposition}\label{prop:sep-au}
	Almost unperforation of the Cuntz semigroup is a separably inheritable property of $C^*$-algebras in the sense of \cite[Section~II.8.5]{Blackadar:Book}.  That is,
	\begin{enumerate}
		\item if $(A_n)_{n=1}^\infty$ is an increasing sequence of separable $C^*$-algebras with almost unperforated Cuntz semigroups, then the inductive limit has almost unperforated Cuntz semigroup, and
		\item if $A$ is a $C^*$-algebra with almost unperforated Cuntz semigroup and $S \subseteq A$ is a separable set, then there is a separable $C^*$-algebra $A_0 \subseteq A$ containing $S$ such that $A_0$ has almost unperforated Cuntz semigroup.
	\end{enumerate}
\end{proposition}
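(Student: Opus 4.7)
\emph{Plan.} I would handle the two parts separately: (1) by a direct approximation argument that moves a given Cuntz subequivalence into some $A_n$, and (2) by an iterative separabilization in the spirit of \cite[Section~II.8.5]{Blackadar:Book} in which witnesses to almost unperforation are harvested from the ambient $A$. Both parts rest on two standard perturbation tools. First, a Kirchberg--R{\o}rdam-type perturbation lemma: if $x, y \in M_\infty(A)$ are positive and $\|x - y\| < \delta$, then $(x - \delta - t)_+ \precsim (y - t)_+$ for every $t \geq 0$. Second, R{\o}rdam's reformulation that $x \precsim y$ if and only if for each $\epsilon > 0$ there exist $\delta > 0$ and $v$ with $v^* (y - \delta)_+ v = (x - \epsilon)_+$.

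For (1), let $A = \overline{\bigcup_n A_n}$ and take positive $a, b \in M_\infty(A)$ with $a^{\oplus k+1} \precsim b^{\oplus k}$. Fixing $\epsilon > 0$, I would apply R{\o}rdam's reformulation to obtain $\delta > 0$ and $v \in M_\infty(A)$ with $v^* ((b - \delta)_+)^{\oplus k} v = ((a - \epsilon/2)_+)^{\oplus k+1}$, and then approximate $a, b, v$ by $a_n, b_n, v_n \in M_\infty(A_n)$ for $n$ large. The crux is that $v_n$ now lies in $M_\infty(A_n)$, so the resulting approximate equality (up to a controlled error $\eta$) witnesses
\begin{equation*}
    ((a_n - \epsilon/2 - \eta)_+)^{\oplus k+1} \precsim ((b_n - \delta)_+)^{\oplus k} \quad \text{inside } A_n.
\end{equation*}
Almost unperforation in $A_n$ then gives $(a_n - \epsilon/2 - \eta)_+ \precsim (b_n - \delta)_+$ in $A_n$, hence in $A$, and two applications of the perturbation lemma yield $(a - \epsilon)_+ \precsim b$ in $A$. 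Since $\epsilon$ is arbitrary, $a \precsim b$.

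For (2), I would build an increasing sequence of separable subalgebras $C^*(S) \subseteq B_0 \subseteq B_1 \subseteq \cdots \subseteq A$ and set $A_0 = \overline{\bigcup_n B_n}$. At each stage, fix a countable dense subset $D_n \subseteq M_\infty(B_n)^+$; for each $(a, b, k) \in D_n \times D_n \times \mathbb N$ and each positive rational $\epsilon$ with $((a - \epsilon)_+)^{\oplus k+1} \precsim b^{\oplus k}$ in $A$, almost unperforation in $A$ produces $(a - \epsilon)_+ \precsim b$ in $A$, witnessed by some sequence $(w_m) \subseteq M_\infty(A)$ with $w_m^* b w_m \to (a - \epsilon)_+$; include all these (countably many) sequences in $B_{n+1}$. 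To verify almost unperforation in $A_0$, take positive $a, b \in M_\infty(A_0)$ with $a^{\oplus k+1} \precsim b^{\oplus k}$ in $A_0$: approximate them by elements of some $D_n$, use the perturbation lemma together with R{\o}rdam's reformulation to transfer the hypothesis (with slightly worse parameters) to the approximating pair inside $A$, and then extract the required witness from $B_{n+1} \subseteq A_0$ to conclude $(a - \epsilon)_+ \precsim b$ in $A_0$.

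The main obstacle is the parameter bookkeeping in part (2): one must ensure the countable family of quadruples $(a, b, k, \epsilon)$ built into each $B_{n+1}$ is dense enough that every instance of the almost-unperforation hypothesis in $A_0$ can be handled by approximation, and that the approximation of $b$ (which appears on both sides of the relation) is arranged so the extracted witness lives in $A_0$ rather than merely in $A$.
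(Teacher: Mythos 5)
Your proposal is correct, and the two parts relate to the paper's proof differently. For part (1), your argument is essentially the one in the paper: the paper also fixes $\epsilon > 0$, uses R{\o}rdam's results (\cite[Propositions~2.2 and~2.4]{Rordam:JFA}, i.e., your perturbation lemma and your reformulation of $\precsim$ via cut-downs) to replace $a^{\oplus k+1} \precsim b^{\oplus k}$ by a cut-down subequivalence witnessed by a single element, approximates $a$, $b$, and the witness into some $M_\infty(A_n)$, invokes almost unperforation there, and transfers back with a chain of cut-down subequivalences ending in $(a - 4\epsilon)_+ \precsim b$. For part (2), however, you take a genuinely different route. The paper disposes of part (2) in one line by citing \cite[Proposition~8.1.6]{FHLRTVW:MAMS}: almost unperforation of the Cuntz semigroup is an \emph{elementary} property in the continuous model theory of $C^*$-algebras, and elementary properties are separably inheritable via a downward L\"owenheim--Skolem argument (one takes a separable elementary substructure containing $S$). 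Your Blackadar-style iterative harvesting construction proves the same thing by hand, which is more self-contained but requires exactly the bookkeeping you flag at the end. That obstacle is real but routinely fixable: because $b$ sits on the large side, a witness for $(a'-\epsilon)_+ \precsim b'$ with $b'$ merely close to $b$ does not transfer back (the witnesses $w_m$ have uncontrolled norm), so you should first cut down $b$ using R{\o}rdam's reformulation and then harvest witnesses for all instances $\big((a'-\epsilon)_+\big)^{\oplus k+1} \precsim \big((b'-\delta)_+\big)^{\oplus k}$ with \emph{rational} parameters on both sides (equivalently, close each countable dense set $D_n$ under rational cut-downs $x \mapsto (x - \delta)_+$). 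With that amendment the transfer $(a-\epsilon)_+ \precsim (a'-\epsilon')_+ \precsim (b'-\delta)_+ \precsim b$ goes through inside $A_0$ and your construction is complete. The trade-off: your approach avoids any model-theoretic input and makes the witnesses explicit, while the paper's citation is instant and, once one knows the property is axiomatizable, yields separable inheritance (and preservation under ultraproducts, which the paper also uses in Proposition~\ref{prop:trace-kernel-ideal}) uniformly.
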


\begin{proof}
	The second part follows from the fact that almost unperforation of the Cuntz semigroup is an elementary property of $C^*$-algebras; see \cite[Proposition~8.1.6]{FHLRTVW:MAMS}.  The first part is certainly known, but we have been unable to find a suitable reference, so we give a proof here.
	
	Let $A$ denote the inductive limit of the $A_n$.  Fix an integer $k \geq 1$ and positive $a, b \in M_\infty(A)$ such that $a^{\oplus k+1} \precsim b^{\oplus k}$ in $M_\infty(A)$.  It suffices to show that $(a - 4\epsilon)_+ \precsim b$ in $M_\infty(A)$ for all $\epsilon > 0$ since this readily implies $a \precsim b$; indeed, if $v \in M_\infty(A)$ with $\|v^*b v - (a - 4\epsilon)_+ \| < \epsilon$, then $\|v^*b v - a \| < 5 \epsilon$.
	
	We will quote the functional calculus result of \cite[Section~2]{Rordam:JFA}, which is stated in terms of the continuous function $f_\epsilon \colon [0, \infty) \rightarrow \mathbb R$ given by
	\begin{equation}\label{eq:cutdown}
		f_\epsilon(t) = 
		\begin{cases} 
			0, & 0 \leq t \leq \epsilon ;\\ 
			\epsilon^{-1} (t - \epsilon), & \epsilon < t < 2\epsilon; \\
			1, & t \geq 2 \epsilon.
		\end{cases}
	\end{equation}
	Note that for any $C^*$-algebra $D$, positive $a \in D$, and $\epsilon > 0$, $f_\epsilon(a)$ is Cuntz equivalent to $(a - \epsilon)_+$ in $D$.  So when we apply the Cuntz subequivalence results from \cite{Rordam:JFA}, we will freely use $(a - \epsilon)_+$ in place of $f_\epsilon(a)$.
	
	Fix $\epsilon > 0$ and use \cite[Proposition~2.4]{Rordam:JFA}  to find $\delta > 0$ such that 
	\begin{equation}
		(a - \epsilon)_+^{\oplus k+1} \precsim (b - 2\delta)_+^{\oplus k}.
	\end{equation}
	Then choose an integer $n \geq 1$ and $a_0, b_0 \in M_\infty(A)$ such that $\|a - a_0\| < \epsilon$ and $\|b - b_0 \|< \delta$.  Then in $M_\infty(A)$, we have
	\begin{equation}
		(a_0 - 2\epsilon)_+^{\oplus k+1} \precsim (a - \epsilon)_+^{\oplus k+1} \precsim (b - 2\delta)_+^{\oplus k} \precsim (b_0 - \delta)_+^{\oplus k},
	\end{equation}
	using \cite[Proposition~2.2]{Rordam:JFA} for the first and third subequivalences.  Fix $v \in M_\infty(B)$ such that
	\begin{equation}
		\|v^*(b_0 - \delta)_+^{\oplus k} v - (a_0 - 2\epsilon)_+^{\oplus k+1} \| < \epsilon.
	\end{equation}
	Enlarging $n$ if necessary and approximating $v$ by an element of $M_\infty(A_n)$, we may assume $v \in M_\infty(A_n)$.  Then another application of \cite[Proposition~2.2]{Rordam:JFA} yields
	\begin{equation}
		(a_0 - 3\epsilon)_+^{\oplus k+1} \precsim v^*(b_0 - \delta)_+^{\oplus k}v \precsim (b_0 - \delta)_+^{\oplus k}
	\end{equation}
	in $M_\infty(A_n)$.  Since $A_n$ has almost unperforated Cuntz semigroup, it follows that $(a_0 - 3\epsilon)_+ \precsim (b_0 - \delta)_+$ in $M_\infty(A_n)$ and hence also in $M_\infty(A)$.  A final two application of \cite[Proposition~2.2]{Rordam:JFA} then yields
	\begin{equation}
		(a - 4\epsilon)_+ \precsim (a_0 - 3\epsilon)_+ \precsim (b_0 - \delta)_+ \precsim b
	\end{equation}
	in $M_\infty(A)$, completing the proof.
\end{proof}

\subsection{Ultraproducts and the $K_1$-group}\label{sec:ultraproducts}

For a sequence of $C^*$-algebras $(A_n)_{n=1}^\infty$, let $\prod A_n$ denote the $\ell^\infty$-product, and define
\begin{equation} 
	\sum_\omega A_n = \Big\{(a_n)_{n=1}^\infty \in \prod A_n : \lim_{n \rightarrow \omega} \|a_n\| = 0 \Big\},
\end{equation}
which is an ideal in $\prod A_n$.  The quotient $\prod_\omega A_n = \prod A_n / \sum_\omega A_n$ is the $C^*$-ultraproduct of the $A_n$.  When $(A_n)_{n=1}^\infty$ is the constant sequence $A$, we will instead use the notation $A_\omega$ and refer to this $C^*$-algebra as the $C^*$-ultrapower of $A$, and we identify $A$ with the subalgebra of $A_\omega$ consisting of the equivalence classes of constant sequences in $A$.  For an integer $d \geq 1$, we will freely identify $M_d(\prod A_n)$ with $\prod M_d(A_n)$, and similarly for $\sum_\omega A_n$ and $\prod_\omega A_n$.

In a similar fashion, for a sequence of abelian groups $(G_n)_{n=1}^\infty$, let $\prod G_n$ denote the product group and consider the subgroup
\begin{equation}
	\sum_\omega G_n = \Big\{(g_n)_{n=1}^\infty \in \prod G_n : g_n = 0 \text{ for $\omega$-many } n \}.
\end{equation}
The quotient $\prod_\omega G_n = \prod G_n / \sum_\omega G_n$ is the (abelian group) ultraproduct of the $G_n$.  Again, the ultrapower $G_\omega$ of an abelian group $G$ is defined as the ultraproduct of the constant sequence $G$.

For a sequence of $C^*$-algebras $(A_n)_{n=1}^\infty$, there is a group homomorphism 
\begin{equation}\label{eq:eta}
	\eta \colon K_1\big(\prod A_n\big) \rightarrow \prod K_1(A_n) \colon \big[(u_n)_{n=1}^\infty\big]_1 \mapsto \big([u_n]_1\big)_{n=1}^\infty
\end{equation}
induced by the projection maps $\prod A_n \twoheadrightarrow A_m$ for $m \geq 1$.  The map $\eta$ canonically induces a map 
\begin{equation}
	\bar\eta \colon K_1\big(\prod_\omega A_n\big) \rightarrow \prod_\omega K_1(A_n)
\end{equation}
as we now describe.  

Consider the exact sequence 
\begin{equation}
	\begin{tikzcd}
		0 \arrow{r} & \sum_\omega A_n \arrow{r}{\iota} & \prod A_n \arrow{r}{\rho} & \prod_\omega A_n \arrow{r} & 0
	\end{tikzcd}
\end{equation}
of $C^*$-algebras.  Since projections (resp. unitaries) in $M_d(\prod_\omega A_n)$ lift to projections (resp.\ unitaries) in $M_d(\prod A_n)$ for all $d \geq 1$, the map $K_0(\rho)$ (resp.\ $K_1(\rho))$ is surjective.  So the six-term exact sequence in $K$-theory induces the short exact sequence in the top row of the diagram
\begin{equation}\label{eq:k1-ultraproduct}
	\begin{tikzcd}
		0 \arrow{r} & K_1\big(\sum_\omega A_n\big) \arrow{r}{K_1(\iota)} \arrow[dashed]{d}{\eta_0} & K_1\big(\prod A_n \big) \arrow{r}{K_1(\rho)} \arrow{d}{\eta} & K_1\big(\prod_\omega A_n\big) \arrow{r} \arrow[dashed]{d}{\bar\eta} & 0\phantom{.} \\
		0 \arrow{r} & \sum_{\omega} K_1(A_n) \arrow{r} & \prod K_1(A_n) \arrow{r} & \prod_\omega K_1(A_n) \arrow{r} & 0.
	\end{tikzcd}
\end{equation}
Fix an integer $d \geq 1$ and a unitary $u = (u_n)_{n=1}^\infty \in M_d(\sum_\omega A_n + \mathbb C1_{\prod A_n})$.  There is a sequence of unitaries $(v_n)_{n=1}^\infty \in M_d(\mathbb C1_{\prod A_n})$ such that $\|u_n - v_n\| \rightarrow 0$ along $\omega$.  Then for $\omega$-many $n$, we have $\|u_n - v_n\| < 2$, and for all such $n$, $[u_n]_1 = [v_n]_1 = 0 \in K_1(A_n)$.  So $\eta([u]_1) \in \sum_\omega K_1(A_n)$, giving the existence of the map $\eta_0$ and hence also $\bar\eta$.

\subsection{The trace-kernel extension}\label{sec:trace-kernel}

We will also need to consider the tracial ultrapower of a tracial $C^*$-algebra.  More precisely, let $A$ be a $C^*$-algebra with a fixed trace $\tau$ and equip $A$ with the seminorm $\|a\|_2 = \tau(a^*a)^{1/2}$ for $a \in A$.  (When this construction is applied, $A$ will have unique trace, so we do not include $\tau$ in the notation.)  The \emph{tracial ultrapower} of $A$ is the $C^*$-algebra
\begin{equation}
	A^\omega = \ell^\infty(A) /\{ (a_n)_{n=1}^\infty \in \ell^\infty(A) : \lim_{n \rightarrow \omega} \|a_n\|_2 = 0 \},
\end{equation}
where $\ell^\infty(A)$ denotes the $C^*$-algebra of bounded sequences in $A$.  Since $\|a\|_2 \leq \|a\|$ for all $a \in A$, there is a canonical quotient map $q_A \colon A_\omega \rightarrow A^\omega$ induced by the identify map on representing sequences.  The \emph{trace-kernel ideal} of $A$ is the ideal $J_A = \ker(q_A) \subseteq A_\omega$.

It is well-known that $A^\omega$ is always a tracial von Neumann algebra with a faithful normal trace defined on representing sequences by
\begin{equation}\label{eq:limit-trace}
	(a_n)_{n=1}^\infty \mapsto \lim_{n \rightarrow \omega} \tau(a_n).
\end{equation}
In fact, if $\pi_\tau$ denotes the GNS representation of $A$, then $A^\omega$ is precisely the tracial ultrapower of the tracial von Neumann algebra $\pi_\tau(A)''$; indeed, Kaplansky's density theorem implies the map $A^\omega \rightarrow (\pi_\tau(A)'')^\omega$ obtained by applying $\pi_\tau$ entrywise is an isomorphism.  If $\tau$ is the unique trace on $A$, then $\pi_\tau(A)''$ is a factor (see \cite[Theorem~6.7.4 and Corollary~6.8.5]{Dixmier:Book}) and hence so is $A^\omega$.  When we apply this construction in the proof of Theorem~\ref{thm:k1-inj}, $A$ will be a simple, infinite-dimensional $C^*$-algebra with unique trace, and then $A^\omega$ will be a II$_1$ factor; indeed, since $A$ is simple, the canonical map $A \rightarrow A^\omega$ is injective, and since $A$ is infinite-dimensional, so is $A^\omega$.  We use the same symbol $\tau$ for the traces on $A_\omega$ and $A^\omega$ defined on representing sequences by \eqref{eq:limit-trace}.

The following result collects the properties of $J_A$ which will be needed in the next section.  
Recall from \cite[Definition~1.4]{Schafhauser:AnnMath} that a $C^*$-algebra $I$ is \emph{separably stable} if every separable $C^*$-subalgebra of $I$ is contained in a separable, stable $C^*$-subalgebra of $I$.  The proof of separable stability of $J_A$ is essentially the same as that given in \cite[Lemma~6.11]{CGSTW1}, specialized to the case $A$ has unique trace.  The result in \cite[Lemma~6.11]{CGSTW1} also assumes $A$ is exact and $\mathcal Z$-stable, but this is only needed to access strict comparison.  A more general result of this form will appear in \cite{CGSTW2}.
	
\begin{proposition}[{cf.\ \cite[Lemma~6.11]{CGSTW1}}]\label{prop:trace-kernel-ideal}
	If $A$ is a unital, simple $C^*$-algebra with unique trace and strict comparison, then $J_A$ is separably stable and has almost unperforated Cuntz semigroup.
\end{proposition}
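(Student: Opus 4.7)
The statement has two parts, which I will address separately. The overall strategy follows \cite[Lemma~6.11]{CGSTW1}, adapted since we directly assume strict comparison in place of $\mathcal{Z}$-stability.

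For almost unperforation of the Cuntz semigroup of $J_A$, the plan is a three-step transfer.  First, Proposition~\ref{prop:comparison} gives that $A$ has almost unperforated Cuntz semigroup.  Second, since this is an elementary property of $C^*$-algebras (as invoked in the proof of Proposition~\ref{prop:sep-au}), it is preserved under $C^*$-ultrapowers, so $A_\omega$ has almost unperforated Cuntz semigroup.  Third, almost unperforation passes to any two-sided ideal $I$ of a $C^*$-algebra $B$: given positive $a, b \in M_\infty(I)$, Cuntz subequivalence $a \precsim b$ in $B$ implies the same in $I$, by replacing any witness $v \in M_\infty(B)$ with $vb^{1/m} \in M_\infty(I)$ and letting $m \to \infty$ (using that $(a-\epsilon)_+ \in \overline{bBb} \subseteq I$ since $b \in I$).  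Applying this with $I = J_A$ and $B = A_\omega$ concludes the first part.

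For separable stability, the plan is to invoke the Hjelmborg--R{\o}rdam characterization of stability of $\sigma$-unital $C^*$-algebras together with a diagonal construction.  The crux is the following lemma: for every positive $a \in J_A$ with $\|a\| \leq 1$, there is positive $b \in J_A$ with $\|b\| \leq 1$, $ab = 0$, and $a \precsim b$ in $A_\omega$.  To prove it, lift $a$ to $(a_n)_{n=1}^\infty \in \ell^\infty(A)$ with $\lim_{n \to \omega} \|a_n\|_2 = 0$, and for each $n$ set $\epsilon_n = \|a_n\|_2^{1/2}$, $h_n = (a_n - \epsilon_n)_+$, and $b_n = \epsilon_n\, g_{\epsilon_n}(h_n)$, where $g_\delta(t) = (1 - t/\delta)_+$.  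Direct norm estimates give $\|b_n\| \leq \epsilon_n$ and $\|a_n b_n\| = O(\epsilon_n^2)$, both tending to $0$ along $\omega$, so $b := (b_n)$ lies in $J_A$ and satisfies $ab = 0$.  Markov-type tracial inequalities give $d_\tau(h_n) \leq \tau(a_n)/\epsilon_n \leq \epsilon_n$ and $d_\tau(b_n) \geq 1 - \tau(h_n)/\epsilon_n \geq 1 - \epsilon_n$, so for $\omega$-many $n$, $d_\tau(h_n) < d_\tau(b_n)$; strict comparison in $A$ (using unique trace) then gives $h_n \precsim b_n$.  Since $(a_n - \epsilon)_+ \leq h_n$ whenever $\epsilon > \epsilon_n$, this yields $(a - \epsilon)_+ \precsim b$ in $A_\omega$ for every $\epsilon > 0$, hence $a \precsim b$ in $A_\omega$.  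With the key lemma in hand, a standard diagonal construction gives, for any prescribed separable $B \subseteq J_A$, a separable stable subalgebra $C \subseteq J_A$ containing $B$: one iteratively enlarges $B$ by adjoining, for each of countably many positive elements in a dense subset of each stage, an orthogonal Cuntz-dominant $b$ along with a countable family of Cuntz-subequivalence witnesses supplied by the key lemma, then takes the closure of the union.

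The principal obstacle is the key lemma, where one must simultaneously keep $d_\tau(h_n)$ small enough to invoke strict comparison in $A$, $\|b_n\|_2$ small enough that $b \in J_A$, and $\|a_n b_n\|$ small enough that $ab = 0$ in $A_\omega$; the choice $\epsilon_n = \|a_n\|_2^{1/2}$ balances these constraints simultaneously.  The remaining ingredients — the transfer of almost unperforation from $A_\omega$ to $J_A$ and the diagonal construction producing a separable stable subalgebra from the key lemma — proceed by standard arguments.
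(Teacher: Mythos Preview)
Your argument for almost unperforation of the Cuntz semigroup of $J_A$ matches the paper's: pass from $A$ to $A_\omega$ by elementarity, then down to the ideal. (Minor correction in the ideal step: use $b^{1/m}v$ rather than $vb^{1/m}$, so that $(b^{1/m}v)^*b(b^{1/m}v)=v^*b^{1+2/m}v\to v^*bv$.)

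The separable stability argument, however, has a genuine gap. You set $b_n=\epsilon_n\,g_{\epsilon_n}(h_n)$ with $\epsilon_n=\|a_n\|_2^{1/2}\to 0$ along $\omega$, so $\|b_n\|\le\epsilon_n\to 0$ in \emph{operator} norm. This forces $b=0$ in $A_\omega$, not merely $b\in J_A$, and then $a\precsim b$ is impossible for $a\ne 0$. (Membership in $J_A$ is governed by $\|\cdot\|_2\to 0$; having $\|\cdot\|\to 0$ already kills the class in $A_\omega$.) Dropping the prefactor $\epsilon_n$ does not rescue the construction: then $\tau(b_n)\ge 1-\epsilon_n$, so $b\notin J_A$. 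Even granting $h_n\precsim b_n$ coordinatewise, there is a second obstruction to lifting this to $A_\omega$: any $r_n$ with $r_n^*b_nr_n$ close to $(h_n-\delta)_+$ must satisfy $\|r_n\|^2\ge\|(h_n-\delta)_+\|/\|b_n\|$, which is unbounded when $\|b_n\|\to 0$ while $\|h_n\|$ need not tend to $0$. The paper avoids both issues by comparing against an element \emph{outside} $J_A$: since $d_\tau(a)=0<1=d_\tau(1_{A_\omega}-f_\epsilon(a))$ and $A_\omega$ itself has strict comparison with respect to its limit trace (\cite[Lemma~1.23]{BBSTWW:MAMS}), one gets $a\precsim 1_{A_\omega}-f_\epsilon(a)$ directly in $A_\omega$; then \cite[Proposition~2.7(iii)]{KirchbergRordam:AmerJMath} produces $v\in A_\omega$ with $v^*v=(a-2\epsilon)_+$ and $vv^*$ orthogonal to $(a-2\epsilon)_+$. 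Since $v^*v\in J_A$, automatically $v\in J_A$, which is exactly the input to the Hjelmborg--R{\o}rdam criterion (via \cite[Proposition~6.10]{CGSTW1}).
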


\begin{proof}
	First note that $A$ has almost unperforated Cuntz semigroup by Proposition~\ref{prop:comparison}.  Further, the condition is elementary, and in particular passes to ultrapowers, by \cite[Proposition~8.1.6]{FHLRTVW:MAMS}, so $A_\omega$ has almost unperforated Cuntz semigroup.  It follows that $J_A$ has almost unperforated Cuntz semigroup as for any positive $a, b \in M_\infty(J_A)$, $a \precsim b$ in $M_\infty(A_\omega)$ implies $a \precsim b$ in $M_\infty(J_A)$.  Indeed, if $a \precsim b$ in $M_\infty(A_\omega)$ and $\epsilon > 0$, then there exists $v \in M_\infty(A_\omega)$ with $\|v^*b v - a\| < \epsilon$.  Choose a positive contraction $e \in M_\infty(J_A)$ with $\|e b - b\| < \epsilon / (\|v\|^2 + 1)$, and note that $ev \in M_\infty(J_B)$ satisfies 		$\|(ev)^*b(ev) - a\| < 3 \epsilon$.	
		
	To show $J_A$ is separably stable, it suffices to show that for all $\epsilon > 0$ and positive $a \in J_A$, there is $v \in J_A$ such that $v^*v = (a - 2\epsilon)_+$ and $v^* (a - 2\epsilon)_+ = 0$ by \cite[Proposition~6.10]{CGSTW1}, which is essentially a restatement of the Hjelmborg--R{\o}rdam characterization of stability for $\sigma$-unital $C^*$-algebras in \cite{HjelmborgRordam:JFA}.  Fix such an $\epsilon$ and $a$, and let $f_\epsilon$ be the function defined in \eqref{eq:cutdown}.  Since $a, f_\epsilon(a) \in J_A$, we have
	\begin{equation}
		d_\tau(a) = 0 < 1 = d_\tau(1_{A_\omega} - f_\epsilon(a)).
	\end{equation}
	Also, $A_\omega$ has strict comparison with respect to the trace $\tau$ by \cite[Lemma~1.23]{BBSTWW:MAMS}, so $a \precsim 1_{A_\omega} - f_\epsilon(a)$ in $A_\omega$.  By \cite[Proposition~2.7(iii)]{KirchbergRordam:AmerJMath}, there exists $v \in A_\omega$ such that $v^*v = (a - 2\epsilon)_+$ and $vv^*$ is in the hereditary subalgebra of $A_\omega$ generated by $1_{A_\omega} - f_\epsilon(a)$.  Note that $v \in J_A$ since $v^*v \in J_A$, and $v^*(a - 2\epsilon)_+ = 0$  since $(1_{A_\omega} - f_\epsilon(a)) (a - 2\epsilon)_+ = 0$.
\end{proof}
	
\subsection{Absorption}\label{sec:absorption}

The proof of Theorem~\ref{thm:k1-inj} will combine the von Neumann algebraic structure of $A^\omega$ with certain rigidity results about (separable subextensions of) the trace-kernel extension coming from the Elliott--Kucerovsky characterization of absorbing extensions in \cite{ElliottKucerovsky:PJM} and the regularity results of Kucerovsky and Ng in \cite{KucerovskyNg:HJM} and Ortega, Perera, and R{\o}rdam in \cite{OrtegaPereraRordam:IMRN}.  We briefly recall the theorem in the form that will be needed here.

Recall that an element $a$ is a $C^*$-algebra $A$ is \emph{full} if $a$ generates $A$ as a (closed, two-sided) ideal, and a $^*$-homomorphism $\phi \colon A \rightarrow B$ between $C^*$-algebras $A$ and $B$ is \emph{full} if $\phi(a)$ is full in $B$ for every non-zero $a \in A$.  For a $C^*$-algebra $I$, let $\mathcal M(I)$ denote the multiplier algebra of $I$.  A $\sigma$-unital, stable $C^*$-algebra $I$ has the \emph{corona factorization property}, introduced in \cite{KucerovskyNg:HJM}, if every full projection in $\mathcal M(I)$ is properly infinite.  The corona factorization property was shown to hold very generally by Ortega, Perera, and R{\o}rdam in \cite{OrtegaPereraRordam:IMRN}.  For us, the main examples will be separable, stable $C^*$-algebras with almost unperforated Cuntz semigroup (see \cite[Propostion~2.17 and Thoerem~5.11]{OrtegaPereraRordam:IMRN}).  In particular, Propositions~\ref{prop:sep-au} and~\ref{prop:trace-kernel-ideal} imply that when $A$ is a unital, simple $C^*$-algebra with unique trace and strict comparison, the $C^*$-algebra $J_A$ has arbitrarily large separable subalgebras with the corona factorization property.

The utility of the corona factorization property is its relation to absorption. Let $A$ and $I$ be $C^*$-algebras with $A$ separable and unital and $I$ $\sigma$-unital and stable.  A unital $^*$-homomorphism $\psi \colon A \rightarrow \mathcal M(I)$ is called \emph{unitally absorbing} if for every unital $^*$-homomorphism $\phi \colon A \rightarrow \mathcal M(I)$, there is a sequence of $2 \times 1$ matrices $(v_n)_{n=1}^\infty$ over $\mathcal M(I)$ such that
\begin{align}
	\| &v_n^* (\phi(a) \oplus \psi(a)) v_n - \psi(a) \| \rightarrow 0, &&a \in A,
\shortintertext{and}
	&v_n^* (\phi(a) \oplus \psi(a)) v_n - \psi(a) \in I, &&a \in A,\ n \geq 1.
\end{align}
The following result, reminiscent of Voiculescu's theorem, provides a powerful method for verifying absorption.  The result does not seem to appear in the literature in precisely this form, but it is a simple repackaging of known results; variations of this have also been recorded as \cite[Theorem~2.3]{Schafhauser:AnnMath} and \cite[Theorem~5.13]{CGSTW1}, for example.
	
\begin{theorem}[see \cite{ElliottKucerovsky:PJM} and \cite{KucerovskyNg:HJM}]\label{thm:absorption}
	If $A$ is a separable, unital, nuclear $C^*$-algebra, $I$ is a $\sigma$-unital, stable $C^*$-algebra with the corona factorization property, and $\psi \colon A \rightarrow \mathcal M(I)$ is a unital, full $^*$-homomorphism, then $\psi$ is unitally absorbing.
\end{theorem}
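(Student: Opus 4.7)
The plan is to deduce Theorem~\ref{thm:absorption} by composing the Elliott--Kucerovsky absorption characterization~\cite{ElliottKucerovsky:PJM} with the Kucerovsky--Ng result~\cite{KucerovskyNg:HJM} that relates fullness to pure largeness under the corona factorization property. Since the theorem is explicitly described as a repackaging of known results, the goal is to record the correct intermediate formulations rather than produce new mathematics.

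First I would recast $\psi$ as an extension. Let $\pi \colon \mathcal M(I) \twoheadrightarrow \mathcal Q(I) := \mathcal M(I) / I$ be the quotient map and set $\tau = \pi \circ \psi \colon A \rightarrow \mathcal Q(I)$, the Busby invariant. Pulling back along $\tau$ produces the unital extension
\begin{equation}
	0 \rightarrow I \rightarrow E_\psi \rightarrow A \rightarrow 0,
\end{equation}
where $E_\psi = \{(a, m) \in A \oplus \mathcal M(I) : \pi(m) = \tau(a)\}$. Fullness of $\psi$ (every nonzero $\psi(a)$ generates $\mathcal M(I)$ as an ideal) combined with the fact that $\pi$ is surjective and $I$ is a full ideal in $\mathcal M(I)$ (because $I$ is $\sigma$-unital and stable) gives that $\tau(a)$ is a full element of $\mathcal Q(I)$ for every nonzero $a \in A$; equivalently, the extension $E_\psi$ is a full extension.

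Second I would invoke the Kucerovsky--Ng machinery: for a $\sigma$-unital, stable $C^*$-algebra $I$ with the corona factorization property, any full, unital extension of a separable, unital $C^*$-algebra by $I$ is \emph{purely large} in the sense of Elliott--Kucerovsky, i.e., for every $x \in E_\psi \setminus I$, the hereditary subalgebra $x^* I x$ of $I$ contains a stable $C^*$-subalgebra which is full in $I$ (this is the content of the main results of \cite{KucerovskyNg:HJM}, recording that CFP is precisely the missing ingredient turning fullness into pure largeness via the properly infinite projection criterion).

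Third I would feed this into the Elliott--Kucerovsky theorem \cite{ElliottKucerovsky:PJM}: for separable, unital, nuclear $A$ and $\sigma$-unital, stable $I$, a unital, purely large extension is unitally absorbing in the sense that for every unital $\phi \colon A \rightarrow \mathcal M(I)$ there exist $2 \times 1$ multipliers $v_n$ such that $v_n^*(\phi(a) \oplus \psi(a))v_n \to \psi(a)$ in norm with each difference lying in $I$. This is exactly the conclusion required. The main obstacle, which is purely bookkeeping, is matching conventions: verifying that ``full'' as used for $\psi \colon A \to \mathcal M(I)$ corresponds to the fullness hypothesis of Kucerovsky--Ng for the associated extension, that nuclearity of $A$ is the hypothesis actually needed for the absorption characterization (to ensure the Voiculescu-type lifting works), and that the unital versions of the various statements align. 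Once these are checked, the theorem falls out immediately.
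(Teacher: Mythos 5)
Your overall route is the same as the paper's: transfer fullness of $\psi$ to the corona algebra (the paper does this directly for $q \circ \psi$; your Busby-invariant/pullback picture is an equivalent formulation, and note the transfer only needs surjectivity of $\pi$, not fullness of $I$ in $\mathcal M(I)$), use Kucerovsky--Ng to convert fullness plus the corona factorization property into absorption, and use nuclearity of $A$ to land in the Elliott--Kucerovsky setting. The only packaging difference is that you route explicitly through pure largeness, whereas the paper cites the equivalence of (i) and (v) in \cite[Theorem~3.5]{KucerovskyNg:HJM}, whose proof is exactly your second and third steps; this is a cosmetic difference, not a genuinely different argument.

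There is, however, one real gap, precisely at the step you dismiss as ``exactly the conclusion required.'' The Elliott--Kucerovsky theorem does not conclude with the sequence $(v_n)_{n=1}^\infty$ in the definition of unitally absorbing: it gives absorption of trivial (weakly nuclear, here automatic) extensions up to strong unitary equivalence, i.e., for each unital $\phi \colon A \rightarrow \mathcal M(I)$ a \emph{single} $2 \times 1$ matrix $v$ over the corona $\mathcal M(I)/I$ (equivalently, a multiplier unitary after the $M_2$ identification) with $v^*\big(q(\phi(a)) \oplus q(\psi(a))\big)v = q(\psi(a))$ exactly, for all $a \in A$. Lifting $v$ to $\mathcal M(I)$ yields differences $v^*(\phi(a)\oplus\psi(a))v - \psi(a)$ lying in $I$, but with no norm control, whereas the definition used in the paper demands these differences also tend to $0$ in norm along a sequence $(v_n)$. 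Bridging exact, corona-level absorption to this approximate, multiplier-level form is standard but requires an actual argument---for instance, absorbing the infinite repeat of $\psi$ and cutting down by a quasicentral approximate unit, which is how the proof of \cite[Corollary~5.10]{CGSTW1} (cited by the paper for exactly this step) proceeds. Adding that bridge closes the gap; the rest of your outline is correct.
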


\begin{proof}
	Let $q \colon \mathcal M(I) \rightarrow \mathcal M(I) / I$ denote the quotient map.  Since $\psi$ is full and $q$ is unital, $q \circ \psi \colon A \rightarrow \mathcal M(I) / I$ is full.  The conditions on $A$ and $I$ imply that $q \circ \psi$ is absorbing in the sense that for all unital $^*$-homomorphisms $\phi \colon A \rightarrow \mathcal M(I)$, there is a $2 \times 1$ matrix $v$ over $\mathcal M(I) / I$ such that 
	\begin{equation} 
		v^* (q(\phi(a)) \oplus q(\psi(a))) v = \psi(a), \qquad a \in A,
	\end{equation}
	by the equivalence of (i) and (v) in \cite[Theorem~3.5]{KucerovskyNg:HJM}, which depends heavily on the characterization of (nuclear) unital absorption in \cite{ElliottKucerovsky:PJM}.  It follows that $\psi$ is unitally absorbing; see the proof of \cite[Corollary~5.10]{CGSTW1}, for example (which proves the non-unital case).
\end{proof}

Absorption will be applied through the following \emph{stable uniqueness theorem} of Dadarlat and Eilers (\cite{DadarlatEilers:KT}), which has played a fundamental role in the classification theory for simple, nuclear $C^*$-algebras.  For a $\sigma$-unital, stable $C^*$-algebra $I$, two $^*$-homomorphisms $\phi_0, \phi_1 \colon A \rightarrow \mathcal M(I)$ are \emph{properly asymptotically unitarily equivalent}, written $\phi_0 \approxeq \phi_1$, if there is a continuous path of unitaries $(w_t)_{t \geq 0} \subseteq U(I + \mathbb C 1_{\mathcal M(I)})$ such that 
\begin{equation}
	\lim_{t \rightarrow \infty} \|\phi_0(a) - w_t \phi_1(a) w_t^* \| = 0, \qquad a \in A.
\end{equation}
Note that in the conclusion of the theorem, we are implicitly identifying $M_2(\mathcal M(I))$ with $\mathcal M(M_2(I))$.  The following result is known, and while it is not explicitly stated in \cite{DadarlatEilers:KT}, it follows readily from the results therein.

\begin{theorem}[{cf.\ \cite[Theorem~3.8]{DadarlatEilers:KT}}]\label{thm:stable-uniqueness}
	Let $A$ be a separable, unital $C^*$-algebra and let $I$ be a $\sigma$-unital, stable $C^*$-algebra.  Suppose $\phi_0, \phi_1, \psi \colon A \rightarrow \mathcal M(I)$ are unital $^*$-homomorphisms such that $\operatorname{im}(\phi_0 - \phi_1) \subseteq I$ and $\psi$ is unitally absorbing.  If $[\phi_0, \phi_1] = 0 \in KK(A, I)$, then $\phi_0 \oplus \psi \approxeq \phi_1 \oplus \psi$ as maps into $M_2(\mathcal M(I))$.
\end{theorem}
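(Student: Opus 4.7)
The plan is to extract this result from the machinery in \cite{DadarlatEilers:KT} by combining the Cuntz picture of $KK$-theory with the absorption hypothesis on $\psi$. Since $\phi_0 - \phi_1$ takes values in $I$, the pair $(\phi_0, \phi_1)$ is a Cuntz pair. The vanishing of its class in $KK(A, I)$ then produces, via the standard operator-homotopy mechanism underlying the Cuntz picture, an auxiliary unital $^*$-homomorphism $\pi \colon A \to \mathcal M(I)$ together with a continuous path of unitaries $(w_t)_{t \geq 0} \subseteq U(I + \mathbb C 1)$ in $M_2(\mathcal M(I))$ implementing $\phi_0 \oplus \pi \approxeq \phi_1 \oplus \pi$. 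This is essentially the content of \cite[Theorem~3.8]{DadarlatEilers:KT} prior to its simplification via absorption.

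Next, I would absorb $\pi$ into $\psi$ using the unital absorption hypothesis. Unital absorption applied to $\pi$ yields $2 \times 1$ contractions $v_n$ over $\mathcal M(I)$ with $\|v_n^*(\pi(a) \oplus \psi(a))v_n - \psi(a)\| \to 0$ and differences in $I$ for every $a \in A$. The key point is that each co-isometric $v_n$ can be dilated to a genuine unitary $u_n \in U(M_2(\mathcal M(I)))$, and using path-connectedness of $U(\mathcal M(I))$ --- which holds because $I$ is stable --- the $u_n$ can be stitched together by short unitary paths in $U(I + \mathbb C 1) \subseteq U(M_2(\mathcal M(I)))$. This promotes the approximate intertwinings to a proper asymptotic unitary equivalence $\pi \oplus \psi \approxeq \psi$ inside $M_2(\mathcal M(I))$, where the right-hand side is reinterpreted via the canonical identification $M_2(\mathcal M(I)) \cong \mathcal M(I)$ afforded by stability of $I$. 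Composing this with the first step then gives $\phi_0 \oplus \psi \approxeq \phi_1 \oplus \psi$ in $M_2(\mathcal M(I))$, as required.

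The main obstacle is this absorption step: upgrading the sequential approximate intertwinings furnished by unital absorption into an honest continuous path of unitaries in $U(I + \mathbb C 1)$ witnessing proper asymptotic unitary equivalence. It is precisely here that the $2 \times 2$ matrix amplification becomes forced, since completing a co-isometric intertwiner into a genuine unitary requires a $2 \times 2$ dilation --- this is the exact trade-off flagged in the introduction as the cost of not assuming $\mathcal Z$-stability of the codomain.
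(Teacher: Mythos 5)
Your overall architecture matches the paper's: invoke \cite[Theorem~3.8]{DadarlatEilers:KT} to get $\phi_0 \oplus \pi \approxeq \phi_1 \oplus \pi$ for an auxiliary summand $\pi$ (in the paper, $\pi = \psi_\infty$, the infinite repeat of $\psi$, which is what Theorem~3.8 actually supplies --- the auxiliary map is not arbitrary), and then use absorption of $\psi$ to trade $\pi$ for $\psi$. However, your absorption step contains a genuine error: you claim that the approximate intertwiners $v_n$ from unital absorption can be dilated to unitaries and ``stitched together by short unitary paths in $U(I + \mathbb C 1)$,'' yielding a \emph{proper} asymptotic unitary equivalence $\pi \oplus \psi \approxeq \psi$. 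This is false in general, for two reasons. First, the dilated unitaries $u_n$ live in $U(M_2(\mathcal M(I)))$, and consecutive corrections $u_{n+1}u_n^*$ need not lie in $U(I + \mathbb C 1)$; connectedness of the unitary group of $\mathcal M(I)$ (Cuntz--Higson, using stability) only lets you stitch within $U(M_2(\mathcal M(I)))$, producing a plain asymptotic unitary equivalence $\sim_{\rm asymp}$. Second, properness is actually impossible here: if $\alpha \approxeq \beta$ via a path $(w_t) \subseteq U(I + \mathbb C 1)$, then writing $w_t = \lambda_t 1 + x_t$ with $x_t \in I$ shows $w_t \beta(a) w_t^* - \beta(a) \in I$, and hence $\alpha(a) - \beta(a) \in I$ for all $a$ in the limit. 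There is no reason for $\pi(a) \oplus \psi(a)$ minus (the stabilized image of) $\psi(a)$ to lie in $M_2(I)$, so $\pi \oplus \psi \approxeq \psi$ cannot hold in general --- only $\pi \oplus \psi \sim_{\rm asymp} \psi$ can.

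The gap is repairable, and the repair is exactly the paper's route: absorption gives only $\psi \sim_{\rm asymp} \psi \oplus \psi_\infty \sim_{\rm asymp} \psi_\infty$ (two applications of \cite[Lemma~2.3]{DadarlatEilers:KT}), and one then needs the separate fact that $\approxeq$ of a pair is preserved when a common direct summand is replaced by a map that is merely asymptotically unitarily equivalent to it --- this is \cite[Lemma~3.4]{DadarlatEilers:KT}. The underlying reason this works is that conjugating the proper path $(w_t) \subseteq U(I + \mathbb C 1)$ by a unitary from $\mathcal M(I)$ stays in $U(I + \mathbb C 1)$, since $I$ is an ideal and scalars are central. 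Your final ``composing with the first step'' silently assumes the swapped equivalence is proper; once it is downgraded to $\sim_{\rm asymp}$, the composition is no longer automatic and Lemma~3.4 (or an equivalent argument) is the missing ingredient you would need to supply.
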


\begin{proof}
	Let $\psi_\infty \colon A \rightarrow \mathcal M(I)$ denote the infinite repeat of $\psi$; this is the composition
	\begin{equation}
	\begin{tikzcd}[column sep = small]
		A \arrow{r}{\psi \otimes 1_{\mathcal M(\mathcal K)}} &[4.5ex] \mathcal M(I) \otimes \mathcal M(\mathcal K) \arrow{r} & \mathcal M(I \otimes \mathcal K) \arrow{r}{\cong} & \mathcal M(I),
	\end{tikzcd}
	\end{equation}
where the second map is the canonical inclusion and the third map is induced by an isomorphism $I \otimes \mathcal K \overset\cong\rightarrow I$, using the stability of $I$.  Then $\phi_0 \oplus \psi_\infty \approxeq \phi_1 \oplus \psi_\infty$ by \cite[Theorem~3.8]{DadarlatEilers:KT}.  As $\psi$ is unitally absorbing, so is $\psi_\infty$.  Then two applications of \cite[Lemma~2.3]{DadarlatEilers:KT} (once with $\sigma = \psi_\infty$ and $\pi = \psi$ and once with $\sigma = \psi$ and $\pi = \psi_\infty)$ imply $\psi \sim_{\rm asymp} \psi \oplus \psi_\infty \sim_{\rm asymp} \psi_\infty$ in the sense of \cite[Definition~2.1]{DadarlatEilers:KT}, and the result follows from \cite[Lemma~3.4]{DadarlatEilers:KT}.
\end{proof}

\section{Proof of Theorem~\ref{thm:k1-inj}}

Before proceeding to the proof of Theorem~\ref{thm:k1-inj}, we isolate the following lemma to separate the general extension theoretic methods from the specific structure of the trace-kernel extension.  Note that if $I$ is an ideal in a unital $C^*$-algebra $E$ and $u_0, u_1 \in E$ are unitaries with $u_0- u_1 \in I$, then $u_0u_1^*$ is a unitary in $I + \mathbb C 1_E$.  Further, if $[u_0u_1^*]_1 = 0 \in K_1(I)$, after passing to large matrices, $u_0$ and $u_1$ are homotopic via a homotopy which is constant modulo $I$.  The following result will allow us to both control the size of the matrix stabilization and replace homotopy with a more rigid equivalence relation.  We say that a unitary $v$ in a unital $C^*$-algebra $E$ is \emph{totally full} if the unital $^*$-homomorphism $C(\mathbb T) \rightarrow E \colon f \mapsto f(v)$ is full.

\begin{lemma}\label{lem:ext}
	Let $0 \rightarrow I \overset j \rightarrow E \overset q \rightarrow D \rightarrow 0$ be a unital extension of $C^*$-algebras such that $I$ is separably stable with almost unperforated Cuntz semigroup.  Further, let $u_0, u_1, v \in E$ be unitaries such that $v$ is totally full and $q(u_0) = q(u_1)$.  If $[u_0u_1^*]_1 = 0 \in K_1(I)$, then there is a continuous path of unitaries $(w_t)_{t \geq 0} \subseteq M_2(I + \mathbb C1_E) \subseteq M_2(E)$ such that
	\begin{equation}\label{eq:ext}
		\lim_{t \rightarrow \infty} \Big\| \begin{pmatrix} u_0 & 0 \\ 0 & v \end{pmatrix} - w_t \begin{pmatrix} u_1 & 0 \\ 0 & v \end{pmatrix} w_t^* \Big\| = 0.
	\end{equation}
\end{lemma}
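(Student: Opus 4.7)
The plan is to apply the stable uniqueness theorem (Theorem~\ref{thm:stable-uniqueness}) to the unital $^*$-homomorphisms $\phi_j, \psi \colon C(\mathbb T) \to \mathcal M(I_0)$ obtained by continuous functional calculus at $u_0, u_1, v$, for a suitable separable subalgebra $I_0 \subseteq I$. Since $q(u_0) = q(u_1)$ one has $u_0 - u_1 \in I$, and hence $\phi_0(f) - \phi_1(f) \in I$ for every $f \in C(\mathbb T)$ (first for polynomials in $z,\bar z$, then by continuity). Using the separable inheritance of almost unperforation (Proposition~\ref{prop:sep-au}) together with the separable stability of $I$, I would construct $I_0$ satisfying: (a) $I_0$ is stable with almost unperforated Cuntz semigroup, hence has the corona factorization property by \cite[Proposition~2.17]{OrtegaPereraRordam:IMRN} and \cite[Theorem~5.11]{OrtegaPereraRordam:IMRN}; (b) $I_0$ is invariant under bilateral multiplication by $u_0, u_1, v$ and their adjoints, so $\phi_j, \psi$ take values in $\mathcal M(I_0)$; (c) $I_0$ contains a witness to $[u_0 u_1^*]_1 = 0$ in $K_1(I_0)$ (extracted from a null-homotopy in some $M_n(I + \mathbb C 1_E)$); and (d) $I_0$ contains enough elements so that $\psi$ is full into $\mathcal M(I_0)$, exploiting the total fullness of $v$ in $E$. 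These requirements can be arranged simultaneously by an inductive enlargement that preserves separability.

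With $I_0$ fixed, I would verify the hypotheses of Theorem~\ref{thm:stable-uniqueness} one by one. Nuclearity of $C(\mathbb T)$, the corona factorization property of $I_0$, and fullness of $\psi$ combine via Theorem~\ref{thm:absorption} to show $\psi$ is unitally absorbing. The containment $\operatorname{im}(\phi_0 - \phi_1) \subseteq I_0$ is immediate from (b) and (c). For the vanishing $[\phi_0,\phi_1] = 0 \in KK(C(\mathbb T), I_0)$, the split exact sequence $0 \to C_0(\mathbb T \setminus \{1\}) \to C(\mathbb T) \to \mathbb C \to 0$ gives $KK(C(\mathbb T), I_0) \cong K_0(I_0) \oplus K_1(I_0)$: the $K_0$-summand is zero because $\phi_0$ and $\phi_1$ are both unital, while the $K_1$-summand is represented by the unitary $u_0 u_1^* \in I_0 + \mathbb C 1_{\mathcal M(I_0)}$ and so vanishes by (c).

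Theorem~\ref{thm:stable-uniqueness} then yields a continuous path of unitaries $(w_t)_{t \geq 0}$ in $M_2(I_0) + \mathbb C 1_{\mathcal M(M_2(I_0))} \subseteq M_2(I + \mathbb C 1_E)$ implementing the proper asymptotic unitary equivalence $\phi_0 \oplus \psi \approxeq \phi_1 \oplus \psi$ as maps into $M_2(\mathcal M(I_0))$; evaluating at the coordinate function $z \in C(\mathbb T)$ gives exactly \eqref{eq:ext}. I expect the main difficulty to lie in step one, and specifically in arranging property (d): to transfer total fullness of $v$ in $E$ to fullness of $\psi$ into $\mathcal M(I_0)$ one needs to arrange that a suitable (separable) subalgebra of $E$ containing witnesses to fullness of each $f(v)$ actually acts unitally on $I_0$, so that $1_E$ becomes the unit of $\mathcal M(I_0)$ and fullness transfers across the embedding. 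A secondary subtlety is the clean identification of the $K_1$-component of $[\phi_0,\phi_1]$ with $[u_0 u_1^*]_1$, via a standard Cuntz-picture computation for $KK$-differences of unital $^*$-homomorphisms agreeing modulo the ideal.
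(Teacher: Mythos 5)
Your proposal is correct and follows essentially the same architecture as the paper's proof: separabilize the extension while retaining stability, almost unperforation, total fullness of $v$, and $[u_0u_1^*]_1 = 0$; form the functional calculus maps $\phi_0, \phi_1, \psi$ into the multiplier algebra; get unital absorption of $\psi$ from the corona factorization property via Theorem~\ref{thm:absorption}; apply Theorem~\ref{thm:stable-uniqueness}; and evaluate at the coordinate function. The one genuine divergence is the $KK$-vanishing step. You split $KK(C(\mathbb T), I_0) \cong K_0(I_0) \oplus K_1(I_0)$ and invoke the identification of the $K_1$-component of the Cuntz pair $[\phi_0,\phi_1]$ with $[u_0u_1^*]_1$ --- true, but this is exactly the ``standard Cuntz-picture computation'' you defer. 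The paper sidesteps it: the hypothesis $[u_0u_1^*]_1 = 0 \in K_1(I_0)$ directly yields a unitary path $(\tilde u_t)$ in $M_{d+1}(E)$ from $u_0 \oplus 1_E^{\oplus d}$ to $u_1 \oplus 1_E^{\oplus d}$, constant modulo the ideal, and $f \mapsto f(\lambda(\tilde u_t))$ is then an explicit homotopy of Cuntz pairs after padding both maps with $d$ copies of the point evaluation $\chi(f) = f(1)1$, killing the class with no auxiliary lemma. Your route buys a cleaner conceptual statement; the paper's is more self-contained. One recalibration: the difficulty you flag as primary --- transferring total fullness of $v$ to fullness of $\psi$ into $\mathcal M(I_0)$ --- is in fact the easy part in this unital setting, since fullness of $f(v)$ in a unital algebra means finitely many $x_i, y_i$ satisfy $\sum_i x_i f(v) y_i \approx 1_E$, and any unital $^*$-homomorphism preserves this. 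The genuine work is the separabilization itself (keeping $v$ totally full in a separable subalgebra requires more than witnesses for a dense set of $f$, and stability must survive the inductive limit), which the paper outsources to \cite[Propositions~1.5, 1.6, 1.9 and~2.10]{Schafhauser:AnnMath} and \cite[Corollary~4.1]{HjelmborgRordam:JFA}, precisely the content of your inductive enlargement.
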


\begin{proof}
We first reduce to the case that the extension is separable.  Applying \cite[Proposition~1.9]{Schafhauser:AnnMath} (or rather, its proof) to the unital $^*$-homomorphism $C(\mathbb T) \rightarrow E \colon f \mapsto f(v)$, there is a separable $C^*$-algebra $E_0 \subseteq E$ such that $u_0, u_1, v \in E_0$ and $v$ is totally full in $E_0$.  Further, using \cite[Proposition~2.10]{Schafhauser:AnnMath}, for example, there is a separable $C^*$-algebra $I_0 \subseteq I$ such that $u_0u_1^* \in I_0 + \mathbb C1_E$ and $[u_0u_1^*]_1 = 0 \in K_0(I_0)$.  By Proposition~\ref{prop:sep-au}, almost unperforation of the Cuntz semigroup is a separably inheritable property, and by \cite[Corollary~4.1]{HjelmborgRordam:JFA}, stability is preserved under sequential direct limits of separable $C^*$-algebras.  So by \cite[Proposition~1.6]{Schafhauser:AnnMath}, together with \cite[Proposition~1.5]{Schafhauser:AnnMath}, there is a separable subextension
\begin{equation}\label{eq:sep-subext}
\begin{tikzcd}
	0 \arrow{r} & I_1 \arrow{r}{j_1} \arrow[tail]{d} & E_1 \arrow{r}{q_1} \arrow[tail]{d} & D_1 \arrow{r} \arrow[tail]{d} & 0 \\
	0 \arrow{r} & I \arrow{r}{j} & E \arrow{r}{q} & D \arrow{r} & 0
\end{tikzcd}
\end{equation}
such that $I_0 \subseteq I_1$, $I_1$ is stable with almost unperforated Cuntz semigroup, and $E_0 \subseteq E_1$. After replacing the given extension with the top row of \eqref{eq:sep-subext}, we may assume $I$, $E$, and $D$ are separable.

Define unital $^*$-homomorphisms $\phi_0, \phi_1, \psi \colon C(\mathbb T) \rightarrow  E$ by
\begin{equation}
	\phi_0(f) = f(u_0), \quad \phi_1(f) = f(u_1), \quad \text{and} \quad \psi(f) = f(v)
\end{equation}
for $f \in C(\mathbb T)$.  Note that $q \circ \phi_0 = q \circ \phi_1$ since $q(u_0) = q(u_1)$ and $\psi$ is full since $v$ is totally full.  Let $\mathcal M(I)$ denote the multiplier algebra of $I$ and let $\lambda \colon E \rightarrow \mathcal M(I)$ denote the canonical map.  Then $\operatorname{im}(\lambda \circ \phi_0 - \lambda \circ \phi_1) \subseteq I$, and hence $\lambda \circ \phi_0$ and $\lambda \circ \phi_1$ induce an element $[\lambda \circ \phi_0, \lambda \circ \phi_1] \in KK(C(\mathbb T), I)$.

Since $[u_0u_1^*]_1 = 0 \in K_1(I)$, there is an integer $d \geq 0$ and a path of unitaries $(\tilde u_t)_{0 \leq t \leq 1}$ of unitaries in $M_{d+1}(E)$ such that $\tilde u_i = u_i \oplus 1_E^{\oplus d}$ for $i = 0, 1$ and $q(\tilde u_t) = q(u_0) \oplus 1_D^{\oplus d}$ for all $t \in [0, 1]$.  Define $\chi \colon C(\mathbb T) \rightarrow \mathcal M(I)$ by $\chi(f) = f(1)1_{\mathcal M(I)}$.  Then the path of $^*$-homomorphisms
\begin{equation}
	C(\mathbb T) \rightarrow M_{d+1}(\mathcal M(I)) \colon f \mapsto f(\lambda(\tilde u_t)), \qquad t \in [0, 1],
\end{equation}
defines a homotopy from $(\lambda \circ \phi_0) \oplus \chi^{\oplus d}$ to $(\lambda \circ \phi_1) \oplus \chi^{\oplus d}$ which is constant modulo $M_{d+1}(I)$.  So, \begin{equation}
	[\lambda \circ \phi_0, \lambda \circ \phi_1] = [(\lambda \circ \phi_0) \oplus \chi^{\oplus d}, (\lambda \circ \phi_1) \oplus \chi^{\oplus d}] = 0. 
\end{equation}

Since $I$ is stable and has almost unperforated Cuntz semigroup, \cite[Propostion~2.17 and Thoerem~5.11]{OrtegaPereraRordam:IMRN} imply $I$ has the corona factorization property, so $\psi$ is unitally absorbing by Theorem~\ref{thm:absorption}.  Then $\phi_0 \oplus \psi \approxeq \phi_1 \oplus \psi$ by Theorem~\ref{thm:stable-uniqueness}.  Fix a continuous path of unitaries $(\bar w_t)_{t \geq 0}$ in $M_2(I + \mathbb C1_{\mathcal M(I)})$ such that
\begin{equation}
	\lim_{t \rightarrow \infty} \Big\| \begin{pmatrix} \lambda(\phi_0(f)) & 0 \\ 0 & \lambda(\psi(f)) \end{pmatrix} - \bar w_t \begin{pmatrix} \lambda(\phi_1(f)) & 0 \\ 0 & \lambda(\psi(f)) \end{pmatrix}\bar w_t^*\Big\| = 0.
\end{equation}
for all $f \in C(\mathbb T)$.  Taking $f \in \mathbb C(\mathbb T)$ to be the function $f(z) = z$ for $z \in \mathbb T$, we have
\begin{equation}
	\lim_{t \rightarrow \infty} \Big\| \begin{pmatrix} \lambda(u_0) & 0 \\ 0 & \lambda(v) \end{pmatrix} - \bar w_t \begin{pmatrix} \lambda(u_1) & 0 \\ 0 & \lambda(v) \end{pmatrix}\bar w_t^* \Big\| = 0.
\end{equation}
Let $w_t \in M_2(I + \mathbb C1_E)$ be the unitary with $\lambda(w_t) = \bar w_t$ for $t \geq 0$.  Since $\lambda$ is isometric on $M_2(I)$, \eqref{eq:ext} follows.
\end{proof}

We now have the machinery needed to prove Theorem~\ref{thm:k1-inj} from the introduction.  Proposition~\ref{prop:trace-kernel-ideal} shows that the trace-kernel extension satisfies the conditions of Lemma~\ref{lem:ext}.  The basic strategy is to start with a unitary $u \in A$ with $[u]_1 = 0 \in K_1(A)$ and use the von Neumann algebraic structure of $A^\omega$ to find two self-adjoints $a, b \in A_\omega$ such that $u_0 = u$ and $u_1 = e^{ia} e^{ib}$ satisfy $q_A(u_0) = q_A(u_1)$ and $[u_0u_1^*] = 0 \in K_1(J_A)$.  If the direct summand $v$ in Lemma~\ref{lem:ext} could be removed, it would follow that $u$ is a product of two exponentials in $A_\omega$.  While we do not know how to remove this summand, taking $v = e^{ic}$ for a suitable self-adjoint $c$ will show $u \oplus e^{ic}$ is unitarily equivalent to $e^{ia} e^{ib} \oplus e^{ic}$ in $M_2(A_\omega)$, and rearranging gives that $u \oplus 1_{A_\omega}$ is a product of three exponentials.  Running this argument with a bit more care will further allow us to drop the number of exponentials to two and control the norm of the exponents.

\begin{proof}[Proof of Theorem~\ref{thm:k1-inj}]
	If $\sigma(u) \neq \mathbb T$, there is a scalar $t \in [-\pi, \pi]$ such that $-1 \notin \sigma(e^{-it} u)$.  Then by continuous functional calculus, there is a self-adjoint $k_0 \in A$ of norm at most $\pi$ such that $e^{-it}u = e^{i k_0}$.  The result follows by taking $h = t 1_A \oplus 1_A$ and $k = k_0 \oplus 1_A$.  So we may assume $\sigma(u) = \mathbb T$.  In particular, this implies $A$ is infinite-dimensional, and hence $A^\omega$ is a II$_1$ factor.
	
	Using Borel functional calculus, find a positive element $\bar a \in A^\omega$ of norm at most $2\pi$ such that $q_A(u) = e^{-i \bar a}$ and let $a \in A_\omega$ be a positive element of norm at most $2\pi$ with $q_A(a) = \bar a$.  Then $q_A(ue^{ia}) = 1_{A^\omega}$, so $ue^{ia}$ defines an element $[ue^{ia}]_1 \in K_1(J_A)$.  Also, since $ue^{ia}$ is homotopic to $u$ in the unitary group of $A_\omega$, $K_1(j_B)([ue^{ia}]_1) = [u]_1 = 0$ in $K_1(A_\omega)$.  Therefore, $[ue^{ia}]_1$ is in the image of the exponential map
	\begin{equation}
		\mathrm{exp} \colon K_0(A^\omega) \rightarrow K_1(J_A).
	\end{equation}
	Let $x_0 \in K_0(A^\omega)$ be such that $\mathrm{exp}(x_0) = [ue^{ia}]_1$.  Since $A^\omega$ is a II$_1$ factor, the trace $\tau$ on $A^\omega$ induces an isomorphism $\hat\tau \colon K_0(A^\omega) \rightarrow \mathbb R$.  Let $n \in \mathbb Z$ be such that $\hat\tau(x_0) + n \in [0, 1)$ and set $x = x_0 + n[1_{A^\omega}]_0 \in K_0(A^\omega)$.  Note that $q_A$ is unital and hence $[1_{A^\omega}]_0 \in \operatorname{im}( K_0(q_A)) = \ker(\mathrm{exp})$, so $\mathrm{exp}(x) = \mathrm{exp}(x_0) = [ue^{ia}]_1$.  Let $p \in A^\omega$ be a projection with $\tau(p) = \hat\tau(x)$, so $[p]_0 = x$.  Fix a positive element $b \in A_\omega$ of norm at most $2 \pi$ such that $q_A(b) = 2 \pi p$.  Then $e^{ib}$ is a unitary in the unitization of $J_A$ and $[ue^{ia}]_1 = [e^{ib}]_1 \in K_1(J_A)$.
	
	Let $c \in A$ be a positive contraction with spectrum $[0, 2\pi]$; for example, since we assumed $\sigma(u) = \mathbb T$, we may take $c = \pi(u + u^* + 1_A)$.  Then for all non-zero $f \in C(\mathbb T)$, $f(e^{ic})$ is non-zero and hence full as $A$ is simple.  So $e^{ic}$ is a totally full unitary in $A_\omega$.  Note that the trace-kernel extension
	\begin{equation}
	\begin{tikzcd}
		0 \arrow{r} & J_A \arrow{r}{j_A} & A_\omega \arrow{r}{q_A} & A^\omega \arrow{r} & 0
	\end{tikzcd}
	\end{equation}
	satisfies the conditions of Lemma~\ref{lem:ext} (see Proposition~\ref{prop:trace-kernel-ideal}).  Applying the lemma with $u_0 = ue^{ia}$, $u_1 = e^{ib}$ and $v = e^{ic}$, a standard reindexing argument in $A_\omega$ produces a unitary $w \in M_2(A_\omega)$ such that 
	\begin{equation}
		\begin{pmatrix} u e^{ia} & 0 \\ 0 & e^{ic} \end{pmatrix} = w \begin{pmatrix} e^{ib} & 0 \\ 0 & e^{ic} \end{pmatrix} w^*
	\end{equation}
	Then the elements
	\begin{equation}
		h = w \begin{pmatrix} b & 0 \\ 0 & c \end{pmatrix} w^* - \pi 1_{M_2(A_\omega)} \quad \text{and} \quad k = \pi 1_{M_2(A_\omega)} - \begin{pmatrix} a & 0 \\ 0 &  c \end{pmatrix} 
	\end{equation}
	are self-adjoint elements of $M_2(A_\omega)$ with norm at most $\pi$ such that $u \oplus 1_A = e^{ih} e^{ik}$.  The result then follows from lifting $h$ and $k$ to sequences of self-adjoint elements of norm at most $\pi$ in $M_2(A)$.
\end{proof}

\section{Proof of Theorem~\ref{thm:tarski}}

With Theorem~\ref{thm:k1-inj} in hand, we now have the machinery to prove that the $K_1$-group commutes with products and ultraproducts of unital, simple $C^*$-algebras with unique trace and strict comparsion.

\begin{theorem}\label{thm:k1-product}
	If $(A_n)_{n=1}^\infty$ is a sequence of unital, simple $C^*$-algebras with unique trace and strict comparison, then the map $\eta$ in \eqref{eq:eta} is an isomorphism.
\end{theorem}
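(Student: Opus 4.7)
The plan is to prove surjectivity and injectivity of $\eta$ separately, exploiting the fact that for every integer $d \geq 1$, each $M_d(A_n)$ is again a unital, simple $C^*$-algebra with unique trace and strict comparison. In particular, both Theorem~\ref{thm:sr1} and Theorem~\ref{thm:k1-inj} apply with $A$ replaced by any $M_d(A_n)$.

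For surjectivity, I would invoke Theorem~\ref{thm:sr1} to conclude that each $A_n$ has stable rank one. By Rieffel's theorem, this gives $K_1(A_n) = U(A_n)/U^0(A_n)$, with no matrix amplification needed. Given any $([u_n]_1)_{n=1}^\infty \in \prod K_1(A_n)$, I can choose representatives $u_n \in U(A_n)$, and then $\mathbf u = (u_n)_{n=1}^\infty \in U(\prod A_n)$ satisfies $\eta([\mathbf u]_1) = ([u_n]_1)_{n=1}^\infty$.

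For injectivity, I would start from a class $[\mathbf u]_1 \in \ker \eta$, represented by a unitary $\mathbf u \in M_d(\prod A_n)$ for some $d \geq 1$. Identifying $M_d(\prod A_n) = \prod M_d(A_n)$, write $\mathbf u = (u_n)_{n=1}^\infty$ with $u_n \in U(M_d(A_n))$. The hypothesis $\eta([\mathbf u]_1) = 0$ unwinds to $[u_n]_1 = 0$ in $K_1(M_d(A_n)) \cong K_1(A_n)$ for every $n$. Applying Theorem~\ref{thm:k1-inj} to each $M_d(A_n)$ with, say, $\epsilon = 1/2$ yields self-adjoints $h_n, k_n \in M_{2d}(A_n)$ of norm at most $\pi$ with $\|u_n \oplus 1 - e^{ih_n} e^{ik_n}\| < 1/2$. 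Assembling these coordinatewise gives $\mathbf h, \mathbf k \in M_{2d}(\prod A_n)$, self-adjoint of norm at most $\pi$, with
\[
    \| \mathbf u \oplus 1 - e^{i \mathbf h} e^{i \mathbf k} \| \;=\; \sup_n \| u_n \oplus 1 - e^{i h_n} e^{i k_n} \| \;\leq\; 1/2 \;<\; 2.
\]
Since $e^{i\mathbf h} e^{i\mathbf k}$ lies in $U^0(M_{2d}(\prod A_n))$ and $\mathbf u \oplus 1$ is within distance strictly less than $2$ of it, the unitary $(\mathbf u \oplus 1)(e^{i\mathbf h}e^{i\mathbf k})^*$ has spectrum missing $-1$ and is a single exponential. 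Hence $\mathbf u \oplus 1$ is homotopic to $e^{i\mathbf h}e^{i\mathbf k}$ through unitaries in $M_{2d}(\prod A_n)$, and $[\mathbf u]_1 = [\mathbf u \oplus 1]_1 = 0$.

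There is no substantive obstacle beyond Theorem~\ref{thm:k1-inj} itself. The crucial feature of that theorem is that the norm of the exponents is bounded by the absolute constant $\pi$, independent of the ambient algebra; this uniformity is exactly what lets the $h_n$ and $k_n$ assemble into a bounded sequence representing an element of $M_{2d}(\prod A_n)$. The $2 \times 2$ amplification appearing in Theorem~\ref{thm:k1-inj} is harmless for the $K_1$-theoretic conclusion because $[\mathbf u \oplus 1]_1 = [\mathbf u]_1$. The only detail requiring verification is that $M_d(A_n)$ inherits the hypotheses of Theorem~\ref{thm:k1-inj}, which is routine since simplicity, uniqueness of trace, and strict comparison are all preserved under matrix amplification.
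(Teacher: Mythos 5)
Your proposal is correct and follows essentially the same route as the paper's own proof: surjectivity via Theorem~\ref{thm:sr1} and Rieffel's stable rank one result, and injectivity by applying Theorem~\ref{thm:k1-inj} coordinatewise to the matrix algebras $M_d(A_n)$, assembling the uniformly bounded exponents into $M_{2d}\big(\prod A_n\big)$, and concluding via the distance-less-than-$2$ homotopy argument. Your added remarks (the role of the uniform bound $\pi$ and the harmlessness of the $2\times 2$ amplification) are exactly the points the paper relies on implicitly.
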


\begin{proof}
	Each $A_n$ has stable rank one by Theorem~\ref{thm:sr1}.  It follows from \cite[Theorem~2.10]{Rieffel:JOT} that every $x \in K_1(A_n)$ is induced by a unitary $u \in A_n$.  Hence given $x = (x_n)_{n=1}^\infty \in \prod K_1(A_n)$, there is a unitary $u = (u_n)_{n=1}^\infty \in \prod A_n$ such that $[u_n]_1 = x_n \in K_1(A_n)$ for all $n \geq 1$.  Then, by construction, $\eta([u]_1) = x$, so $\eta$ is surjective.
	
	To see injectivity, suppose $d \geq 1$ and $u = (u_n)_{n=1}^\infty \in M_d\big(\prod A_n \big)$ is a unitary with $[u_n]_1 = 0$ for all $n \geq 1$.  Note that $M_d(A_n)$ is a unital, simple $C^*$-algebra with unique trace and strict comparison.  Hence by Theorem~\ref{thm:k1-inj}, for each $n \geq 1$, there are self-adjoints $h_n, k_n \in U(M_{2d}(A_n))$ with norm at most $\pi$ such that
	\begin{equation}
		\| u_n\oplus 1_{M_d(A_n)} - e^{ih_n} e^{ik_n} \| < 1.
	\end{equation}
	Set $h = (h_n)_{n=1}^\infty$ and $k = (k_n)_{n=1}^\infty \in M_{2d}\big(\prod A_n\big)$.  Then
	\begin{equation}
		\| u\oplus 1_{M_d(\prod A_n)} - e^{ih} e^{ik} \| \leq 1 < 2.
	\end{equation}  
	It follows that $u$ is homotopic to $e^{ih} e^{ik}$ and hence also to $1_{M_{2d}(\prod A_n)}$ in the unitary group of $M_{2d}\big(\prod A_n\big)$, so $[u]_1 = 0$.
\end{proof}

\begin{corollary}\label{cor:k1-ultraproduct}
	If $(A_n)_{n=1}^\infty$ is a sequence of unital, simple $C^*$-algebras with unique trace and strict comparison, then the map $\eta$ in \eqref{eq:eta} induces an isomorphism
	\begin{equation}
		\bar\eta \colon K_1\big( \prod_\omega A_n\big) \overset\cong\longrightarrow \prod_\omega K_1(A_n).
	\end{equation}
\end{corollary}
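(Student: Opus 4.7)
My plan is to deduce the corollary from Theorem~\ref{thm:k1-product} by a diagram chase in \eqref{eq:k1-ultraproduct}, whose rows are both short exact. The middle vertical map $\eta$ is already an isomorphism by Theorem~\ref{thm:k1-product}, so it suffices to prove that the left vertical map $\eta_0$ is surjective; the Five Lemma (or a direct four-term chase) then yields that $\bar\eta$ is an isomorphism.

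For the surjectivity of $\eta_0$, I would invoke stable rank one. By Theorem~\ref{thm:sr1}, each $A_n$ has stable rank one, and hence by Rieffel's result (used already in the proof of Theorem~\ref{thm:k1-product}) every class in $K_1(A_n)$ is represented by a unitary in $A_n$ itself, with no matrix amplification required. Given a representative $(x_n)_{n=1}^\infty \in \sum_\omega K_1(A_n)$, fix an $\omega$-large set $S$ on which $x_n = 0$. Set $u_n = 1_{A_n}$ for $n \in S$ and choose any unitary $u_n \in A_n$ with $[u_n]_1 = x_n$ for $n \notin S$. Then $u = (u_n)_{n=1}^\infty$ is a unitary in $\prod A_n$ satisfying $u - 1_{\prod A_n} \in \sum_\omega A_n$, so $u$ defines a class in $K_1(\sum_\omega A_n)$ with $\eta_0([u]_1) = (x_n)_{n=1}^\infty$.

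With $\eta$ an isomorphism and $\eta_0$ surjective, the rest is a routine chase in \eqref{eq:k1-ultraproduct}. Surjectivity of $\bar\eta$ is immediate from surjectivity of $\eta$ composed with the quotient map in the bottom row, using surjectivity of $K_1(\rho)$ in the top row. For injectivity, if $\bar\eta(x) = 0$, lift $x$ to $z \in K_1(\prod A_n)$ via $K_1(\rho)$; by commutativity of the right square, $\eta(z)$ lies in $\sum_\omega K_1(A_n)$, so $\eta(z) = \eta_0(w)$ for some $w \in K_1(\sum_\omega A_n)$ by surjectivity of $\eta_0$; injectivity of $\eta$ together with commutativity of the left square then gives $z = K_1(\iota)(w)$, whence $x = K_1(\rho)(z) = 0$. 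Since Theorem~\ref{thm:k1-product} does all the substantive work, no significant obstacle remains; the only nontrivial input beyond that theorem is Rieffel's theorem, which allows $\eta_0$ to be treated at the level of single (non-amplified) unitaries.
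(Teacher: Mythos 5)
Your proposal is correct and takes essentially the same route as the paper: both reduce the statement to the surjectivity of $\eta_0$ via a chase in the diagram \eqref{eq:k1-ultraproduct} (with $\eta$ an isomorphism by Theorem~\ref{thm:k1-product}), and both establish that surjectivity by choosing a unitary representative and replacing it with the identity on the $\omega$-large set where $x_n = 0$. The only cosmetic difference is that you invoke Rieffel's theorem to work with non-amplified unitaries ($d = 1$), whereas the paper simply cites the surjectivity of $\eta$ and allows a unitary in $M_d\big(\prod A_n\big)$; both are valid.
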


\begin{proof}
In the notation of \eqref{eq:k1-ultraproduct}, it suffices to show $\eta_0$ is an isomorphism.  The injectivity of $K_1(\iota)$ and $\eta$ implies that of $\eta_0$.  To see $\eta_0$ is surjective, fix $x = (x_n)_{n=1}^\infty \in \sum_\omega K_1(A_n)$, and using that $\eta$ is surjective, find an integer $d \geq 1$ and a unitary $u = (u_n)_{n=1}^\infty \in M_d(\prod A_n)$ such that $[u_n]_1 = x_n$ for all $n$.  Define $v = (v_n)_{n=1}^\infty \in M_d(\prod A_n)$ by $v_n = 1_{M_d(A_n)}$ when $x_n = 0$ and $v_n = u_n$ otherwise.  As $x_n = 0$ for $\omega$-many $n$, it follows that $v \in M_d(\sum_\omega A_n + \mathbb C1_{\prod A_n})$.  By construction, $\eta_0([v]_1) = x$, so $\eta_0$ is surjective.  Therefore, $\eta_0$ is an isomorphism.
\end{proof}

Corollary~\ref{cor:k1-ultraproduct} now implies the main result following the strategy outlined in the introduction.

\begin{proof}[Proof of Theorem~\ref{thm:tarski}]
We may assume $m, n \geq 2$ since it's clear that $C^*_r(F_0) \cong \mathbb C$ and $C^*_r(F_1) \cong C(\mathbb T)$ are neither elementarily equivalent to each other nor to $C^*_r(F_n)$ for $n \geq 2$.  By \cite[Corollary~3.2]{PimsnerVoiculescu:JOT}, $K_1(C^*_r(F_n)) = \mathbb Z^n$. 
Note also that $C^*_r(F_n)$ is simple and has unique trace by \cite{PaschkeSalinas:PJM} (see also \cite{Powers} for the case $n = 2$) and has strict comparision by \cite[Thoerem~A]{AGKEP} (see also \cite[Proposition~6.3.2]{Robert:AdvMath} for the case $n = \infty$).  Then Corollary~\ref{cor:k1-ultraproduct} implies
$K_1(C^*_r(F_n)_\omega) \cong (\mathbb Z^n)_\omega$.  

It suffices to show $\mathbb Z^m$ and $\mathbb Z^n$ are not elementarily equivalent when $m \neq n$.  The following sentence distinguishes $\mathbb Z^m$ from $\mathbb Z^n$ for $m > n$: 
\begin{equation}
    \exists x_1, \ldots, x_n\,  \forall y\, \exists z,\ \bigvee_{I\subseteq \{1,\ldots, n\}} \Big(\sum_{i\in I} x_i +y = 2z\Big).
\end{equation}
Indeed, this holds in $\mathbb{Z}^n$ with $x_i$ being the standard generators for $\mathbb Z^n$ by checking the parity of the coordinates of $y$.  Further, this fails in $\mathbb{Z}^m$ since  if $x_1, \ldots, x_n$ in $\mathbb{Z}^m$ satisfy the condition, then in the quotient by $(2\mathbb{Z})^m$,  $\bar x_1, \ldots, \bar x_n$ span $(\mathbb{Z}/2\mathbb{Z})^m$, which cannot happen since $(\mathbb{Z}/2\mathbb{Z})^m$ is an $m$-dimensional vector space.
\end{proof}

\bibliographystyle{amsplain}
\bibliography{tarski.bib}

\providecommand{\bysame}{\leavevmode\hbox to3em{\hrulefill}\thinspace}
\providecommand{\MR}{\relax\ifhmode\unskip\space\fi MR }
% \MRhref is called by the amsart/book/proc definition of \MR.
\providecommand{\MRhref}[2]{%
  \href{http://www.ams.org/mathscinet-getitem?mr=#1}{#2}
}
\providecommand{\href}[2]{#2}
\begin{thebibliography}{10}

\bibitem{AGKEP}
T.~Amrutam, D.~Gao, S.~Kunnawalkam~Elayavalli, and G.~Patchell, \emph{Strict comparison in reduced group ${C}^*$-algebras}, preprint, arXiv:2412.06031.

\bibitem{Blackadar:LMS}
B.~Blackadar, \emph{Comparison theory for simple {$C^*$}-algebras}, Operator algebras and applications, {V}ol. 1, London Math. Soc. Lecture Note Ser., vol. 135, Cambridge Univ. Press, Cambridge, 1988, pp.~21--54. \MR{996438}

\bibitem{Blackadar:Book}
\bysame, \emph{Operator algebras}, Encyclopaedia of Mathematical Sciences, vol. 122, Springer-Verlag, Berlin, 2006, Theory of $C^*$-algebras and von Neumann algebras, Operator Algebras and Non-commutative Geometry, III. \MR{2188261}

\bibitem{BBSTWW:MAMS}
J.~Bosa, N.~Brown, Y.~Sato, A.~Tikuisis, S.~White, and W.~Winter, \emph{Covering dimension of {$C^*$}-algebras and 2-coloured classification}, Mem. Amer. Math. Soc. \textbf{257} (2019), no.~1233, vii+97. \MR{3908669}

\bibitem{CGSTW1}
J.~Carri\'on, J.~Gabe, C.~Schafhauser, A.~Tikuisis, and S.~White, \emph{Classifying $^*$-homomorphisms {I}: {U}nital simple nuclear {$C^*$}-algebras}, preprint, arXiv:2307.06480.

\bibitem{CGSTW2}
\bysame, \emph{Classifying $^*$-homomorphisms {II}}, in preparation.

\bibitem{CETWW:InventMath}
J.~Castillejos, S.~Evington, A.~Tikuisis, S.~White, and W.~Winter, \emph{Nuclear dimension of simple {$C^*$}-algebras}, Invent. Math. \textbf{224} (2021), no.~1, 245--290. \MR{4228503}

\bibitem{Cuntz:MathAnn}
J.~Cuntz, \emph{Dimension functions on simple {$C^*$}-algebras}, Math. Ann. \textbf{233} (1978), no.~2, 145--153. \MR{467332}

\bibitem{DadarlatEilers:KT}
M.~Dadarlat and S.~Eilers, \emph{Asymptotic unitary equivalence in {$KK$}-theory}, $K$-Theory \textbf{23} (2001), no.~4, 305--322. \MR{1860859}

\bibitem{DadarlatToms:JFA}
M.~Dadarlat and A.~Toms, \emph{Ranks of operators in simple {$C^*$}-algebras}, J. Funct. Anal. \textbf{259} (2010), no.~5, 1209--1229. \MR{2652186}

\bibitem{Dixmier:Book}
J.~Dixmier, \emph{{$C\sp*$}-algebras}, North-Holland Mathematical Library, Vol. 15, North-Holland Publishing Co., Amsterdam-New York-Oxford, 1977, Translated from the French by Francis Jellett. \MR{458185}

\bibitem{DykemaHaagerupRordam:Duke}
K.~Dykema, U.~Haagerup, and M.~R{\o}rdam, \emph{The stable rank of some free product {$C^*$}-algebras}, Duke Math. J. \textbf{90} (1997), no.~1, 95--121. \MR{1478545}

\bibitem{ElliottKucerovsky:PJM}
G.~Elliott and D.~Kucerovsky, \emph{An abstract {V}oiculescu--{B}rown--{D}ouglas--{F}illmore absorption theorem}, Pacific J. Math. \textbf{198} (2001), no.~2, 385--409. \MR{1835515}

\bibitem{FHLRTVW:MAMS}
I.~Farah, B.~Hart, M.~Lupini, L.~Robert, A.~Tikuisis, A.~Vignati, and W.~Winter, \emph{Model theory of {$C^*$}-algebras}, Mem. Amer. Math. Soc. \textbf{271} (2021), no.~1324, viii+127. \MR{4279915}

\bibitem{mtoa1}
I.~Farah, B.~Hart, and D.~Sherman, \emph{Model theory of operator algebras {I}: stability}, Bull. Lond. Math. Soc. \textbf{45} (2013), no.~4, 825--838. \MR{3081550}

\bibitem{MTOA2}
\bysame, \emph{Model theory of operator algebras {II}: model theory}, Israel J. Math. \textbf{201} (2014), no.~1, 477--505. \MR{3265292}

\bibitem{MTOA3}
\bysame, \emph{Model theory of operator algebras {III}: elementary equivalence and {$\rm II_1$} factors}, Bull. Lond. Math. Soc. \textbf{46} (2014), no.~3, 609--628. \MR{3210717}

\bibitem{GongLin:MathScand}
G.~Gong and H.~Lin, \emph{The exponential rank of inductive limit {$C^*$}-algebras}, Math. Scand. \textbf{71} (1992), no.~2, 301--319. \MR{1212713}

\bibitem{HjelmborgRordam:JFA}
J.~Hjelmborg and M.~R{\o}rdam, \emph{On stability of {$C^*$}-algebras}, J. Funct. Anal. \textbf{155} (1998), no.~1, 153--170. \MR{1623142}

\bibitem{KHARLAMPOVICH2006451}
O.~Kharlampovich and A.~Myasnikov, \emph{Elementary theory of free non-abelian groups}, J. Algebra \textbf{302} (2006), no.~2, 451--552. \MR{2293770}

\bibitem{KirchbergRordam:AmerJMath}
E.~Kirchberg and M.~R{\o}rdam, \emph{Non-simple purely infinite {$C^\ast$}-algebras}, Amer. J. Math. \textbf{122} (2000), no.~3, 637--666. \MR{1759891}

\bibitem{KucerovskyNg:HJM}
D.~Kucerovsky and P.~W. Ng, \emph{The corona factorization property and approximate unitary equivalence}, Houston J. Math. \textbf{32} (2006), no.~2, 531--550. \MR{2219330}

\bibitem{Lin:sr1}
H.~Lin, \emph{Strict comparison and stable rank one}, preprint, arXiv:2301.09250.

\bibitem{Lin:JFA1}
\bysame, \emph{Exponential rank of {$C^*$}-algebras with real rank zero and the {B}rown--{P}edersen conjectures}, J. Funct. Anal. \textbf{114} (1993), no.~1, 1--11. \MR{1220980}

\bibitem{Lin:JFA2}
\bysame, \emph{Exponentials in simple {$\mathcal{Z}$}-stable {$C^*$}-algebras}, J. Funct. Anal. \textbf{266} (2014), no.~2, 754--791. \MR{3132729}

\bibitem{louder2022strongly}
L.~Louder and M.~Magee, \emph{Strongly convergent unitary representations of limit groups}, J. Funct. Anal. \textbf{288} (2025), no.~6, Paper No. 110803, 28, With an appendix by Will Hide and Magee. \MR{4847195}

\bibitem{NgRobert:MJM}
P.~W. Ng and L.~Robert, \emph{Sums of commutators in pure {$C^*$}-algebras}, M\"{u}nster J. Math. \textbf{9} (2016), no.~1, 121--154. \MR{3549546}

\bibitem{OrtegaPereraRordam:IMRN}
E.~Ortega, F.~Perera, and M.~R{\o}rdam, \emph{The corona factorization property, stability, and the {C}untz semigroup of a {$C^\ast$}-algebra}, Int. Math. Res. Not. IMRN (2012), no.~1, 34--66. \MR{2874927}

\bibitem{PaschkeSalinas:PJM}
W.~Paschke and N.~Salinas, \emph{{$C^*$}-algebras associated with free products of groups}, Pacific J. Math. \textbf{82} (1979), no.~1, 211--221. \MR{549845}

\bibitem{Phillips:MathScand}
N.~C. Phillips, \emph{Simple {$C^*$}-algebras with the property weak ({FU})}, Math. Scand. \textbf{69} (1991), no.~1, 127--151. \MR{1143477}

\bibitem{Phillips:AmerJMath}
\bysame, \emph{How many exponentials?}, Amer. J. Math. \textbf{116} (1994), no.~6, 1513--1543. \MR{1305876}

\bibitem{PhillipsRingrose:Nara}
N.~C. Phillips and J.~Ringrose, \emph{Exponential rank in operator algebras}, Current topics in operator algebras ({N}ara, 1990), World Sci. Publ., River Edge, NJ, 1991, pp.~395--413. \MR{1193958}

\bibitem{PimsnerVoiculescu:JOT}
M.~Pimsner and D.~Voiculescu, \emph{{$K$}-groups of reduced crossed products by free groups}, J. Operator Theory \textbf{8} (1982), no.~1, 131--156. \MR{670181}

\bibitem{Powers}
R.~Powers, \emph{Simplicity of the {$C^*$}-algebra associated with the free group on two generators}, Duke Math. J. \textbf{42} (1975), 151--156. \MR{374334}

\bibitem{Reiffel:PLMS}
M.~Rieffel, \emph{Dimension and stable rank in the {$K$}-theory of {$C^{\ast}$}-algebras}, Proc. London Math. Soc. (3) \textbf{46} (1983), no.~2, 301--333. \MR{693043}

\bibitem{Rieffel:JOT}
\bysame, \emph{The homotopy groups of the unitary groups of noncommutative tori}, J. Operator Theory \textbf{17} (1987), no.~2, 237--254. \MR{887221}

\bibitem{Ringrose:Edinburgh}
J.~Ringrose, \emph{Exponential length and exponential rank in {$C^*$}-algebras}, Proc. Roy. Soc. Edinburgh Sect. A \textbf{121} (1992), no.~1-2, 55--71. \MR{1169894}

\bibitem{robert2023selfless}
L.~Robert, \emph{Selfless {C*}-algebras}, preprint arXiv:2309.14188.

\bibitem{Robert:AdvMath}
\bysame, \emph{Classification of inductive limits of 1-dimensional {NCCW} complexes}, Adv. Math. \textbf{231} (2012), no.~5, 2802--2836. \MR{2970466}

\bibitem{Rordam:JFA}
M.~R{\o}rdam, \emph{On the structure of simple {$C^*$}-algebras tensored with a {UHF}-algebra. {II}}, J. Funct. Anal. \textbf{107} (1992), no.~2, 255--269. \MR{1172023}

\bibitem{Rordam:IntJMath}
\bysame, \emph{The stable and the real rank of {$\mathcal Z$}-absorbing {$C^*$}-algebras}, Internat. J. Math. \textbf{15} (2004), no.~10, 1065--1084. \MR{2106263}

\bibitem{Schafhauser:Crelle}
C.~Schafhauser, \emph{A new proof of the {T}ikuisis--{W}hite--{W}inter theorem}, J. Reine Angew. Math. \textbf{759} (2020), 291--304. \MR{4058182}

\bibitem{Schafhauser:AnnMath}
\bysame, \emph{Subalgebras of simple {AF}-algebras}, Ann. of Math. (2) \textbf{192} (2020), no.~2, 309--352. \MR{4151079}

\bibitem{TarskiSela}
Z.~Sela, \emph{Diophantine geometry over groups. {VI}. {T}he elementary theory of a free group}, Geom. Funct. Anal. \textbf{16} (2006), no.~3, 707--730. \MR{2238945}

\bibitem{Zhang:AnnMath}
S.~Zhang, \emph{Exponential rank and exponential length of operators on {H}ilbert {$C^*$}-modules}, Ann. of Math. (2) \textbf{137} (1993), no.~1, 129--144. \MR{1200078}

\end{thebibliography}

\end{document}